\newtheorem{theorem}{Theorem}
\begin{document}

\title[Savage-Hutter Equations]{A Finite Volume Scheme for
  Savage-Hutter Equations on Unstructured Grids}

\author[R. Li]{Ruo Li} \address{CAPT, LMAM and School of Mathematical
  Sciences, Peking University, Beijing 100871, P.R. China}
\email{rli@math.pku.edu.cn}

\author[X.-H. Zhang]{Xiaohua Zhang} \address{College of Science and
  Three Gorges Mathematical Research Center, China Three Gorges
  University, Yichang, 443002, China}
\email{zhangxiaohua07@163.com}

\maketitle

\begin{abstract}
  A Godunov-type finite volume scheme on unstructured triangular grids
  is proposed to numerically solve the Savage-Hutter equations in
  curvilinear coordinate. We show the direct observation that the
  model is a not Galilean invariant system. At the cell boundary, the
  modified Harten-Lax-van Leer (HLL) approximate Riemann solver is
  adopted to calculate the numerical flux. The modified HLL flux is
  not troubled by the lack of Galilean invariance of the model and it
  is helpful to handle discontinuities at free interface. Rigidly the
  system is not always a hyperbolic system due to the dependence of
  flux on the velocity gradient. Even though, our numerical results
  still show quite good agreements to reference solutions. The
  simulations for granular avalanche flows with shock waves indicate
  that the scheme is applicable.

  \textbf{keywords}: granular avalanche flow, Savage-Hutter equations,
  finite volume method, Galilean invariant

  %\MSC[2010] 76M20 \sep  74S20

\end{abstract}

%%% Local Variables:
%%% mode: latex
%%% TeX-master: "article"
%%% End:

\section{Introduction}

The Savage-Hutter model was first proposed to describe the motion of a
finite mass of granular material flowing down a rough incline, in
which the granular material was treated as an incompressible continuum
\cite{Savage1989, Gray2003, Hutter2005}. It was then extended to 2D
and with curvilinear coordinate \cite{Hutter1993, Greve1994,
  Universit1999} which may be applied to study natural disasters as
landslides and debris flows \cite{Paik2015}. For more details on
Savage-Hutter model, we refer to \cite{Hutter2005, Pudasaini2007,
  Gray2001, Iverson1997, Iverson2001a, Pitman2005}.

Numerical method for the Savage-Hutter model may be dated back to work
in \cite{Savage1989, Koch1994} using finite difference methods which
are not able to capture shock waves. Later on, regardless of the
Savage-Hutter model or other avalanches of granular flow models, the
Godunov-type schemes are commonly used in literature
\cite{Denlinger2001a, Wang2004, Chiou2005, Cui2014, Zhai2015,
  Vollmoller2004, Rosatti2013} considering that most of them have
hyperbolic nature of the depth averaged equations. We note that
rigidly, the Savage-Hutter model is not hyperbolic since its flux is
depended on the gradient of velocity. Precisely, the dependence is on
the sign of the gradient of velocity components, that the model does
be hyperbolic in the region with monotonic velocity components. Some
related interesting development on numerical methods can be found in
such as \cite{Tai2002, Nichita2004, Chiou2005, Pelanti2011,
  Ouyang2013}.

In practical scenario, landslides, rock avalanches and debris flows
usually occur in mountainous areas with complex topography, that we
have to use unstructured grids for numerical simulation. However,
there is few work of numerical methods for Savage-Hutter equations on
unstructured grids. This may due primarily to the lack of Galilean
invariance of the model, which is a direct observation to the model as
we point out in section 2. In this paper, we are motivated to develop
a high resolution and robust numerical model for Savage-Hutter
equations under the unstructured grids. Whether the numerical method
may provide a correct solution on the unstructured grids for a problem
without Galilean invariance is a major question here. Meanwhile, the 
loss of hyperbolicity at the zeros of velocity gradients may some
unpredictable behavior in numerical solutions, too.

Additionally, there are still some challenges in numerical solving
strongly convective Savage-Hutter equations on unstructured grids. For
example, shock formation is an essential mechanism in granular flows
on an inclined surface merging into a horizontal run-out zone or
encountering an obstacle when the velocity becomes subcritical from
its supercritical state \cite{Wang2004}. Therefore, numerical
efficiency is an important issue to help resolve the steep gradients
and moving fronts. Noticing that the Savage-Hutter equations and
Shallow water equations have some similarities, we basically follow
the method in \cite{Deng2013} for shallow water equations. The
modified HLL flux is adopted to calculate the numerical flux on the
cell boundary. The formation of the modified HLL flux does not require
the flux to be Galilean invariant, thus formally we have no
difficulties in calculation of numerical flux. The techniques involved
to improve efficiency include the MUSCL-Hancock scheme in time
discretization, the ENO-type reconstruction and the $h$-adaptive
method in spatial discretization.

We apply the numerical scheme to three examples. The first one is a
granular dam break problem, and the second one is a granular avalanche
flows down an incline plane and merges continuously into a horizontal
plane. In the third example, a granular slides down an inclined plane
merging into a horizontal run-out zone is simulated, whereby the flow
is diverted by an obstacle which is located on the inclined plane. It
is interesting that in these examples, the numerical solutions are
agreed with the references solutions quite well. We are indicated that
the scheme is applicable to the model, though the model is lack of the
Galilean invariance and is not always hyperbolic. The reason why the
loss of the hyperbolicity of the model are not destructive to the
numerical methods is still under investigation.

The rest part of the paper is organized as follows. In section 2, the
Savage-Hutter equations on the curivilinear coordinate is briefly
introduced. We directly show that the system is not Galilean
invariant. In section 3, we give the details of our numerical
scheme. In section 4, we present the numerical results and a short
conclusion in section 5 close the paper.

%%% Local Variables:
%%% mode: latex
%%% TeX-master: "article"
%%% End:

\section{Governing Equations}

There are various forms of Savage-Hutter equations according to
different coordinate system and a detailed derivation of these
different versions of Savage-Hutter equations has been given in
\cite{Pudasaini2007}. Here, we confine ourselves to one of
them. Precisely, the governing equations are built on an orthogonal
curvilinear coordinate system (see Fig. \ref{fig:sketch}) given in
\cite{Chiou2005}. The curvilinear coordinate $Oxyz$ is defined on the
reference surface, where the $x$-axis is oriented in the downslope,
the $y$-axis lies in the cross-slope direction of the reference
surface and the $z$-axis is normal to them. The downslope inclination
angle of the reference surface $\zeta$ only depends on the downslope
coordinates $x$, that is, there is no lateral variation in the
$y$-direction, thus it is strictly horizontal. The shallow basal
topography is defined by its elevation $z = z^{b}(x,y)$ above the
curvilinear reference surface.
\begin{figure}[htbp] 
  \centering 
  \includegraphics[width=0.4\textwidth]{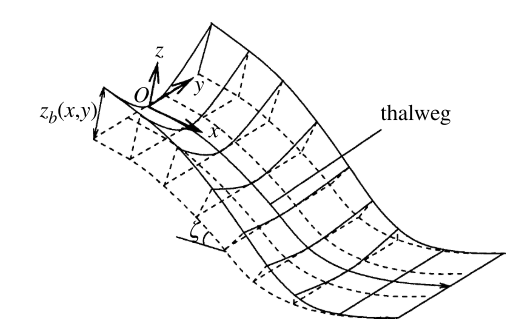} 
  \caption{The sketch of curvilinear coordinate for Savage-Hutter
    model(reproduced from \cite{Chiou2005}).}%图片的名称
  \label{fig:sketch} 	
\end{figure}

The corresponding non-dimensionless Savage-Hutter equations are
\begin{equation}
  \frac{\partial h}{\partial t} + \frac{\partial}{\partial x}(hu)
  + \frac{\partial}{\partial y}(hv) = 0, \label{mass}
\end{equation}
\begin{equation}
  \frac{\partial(hu)}{\partial t} + \frac{\partial}{\partial x}(hu^2
  + \frac{1}{2}\beta_x h^2) + \frac{\partial}{\partial y}(huv) = hs_x,
  \label{momentumx}
\end{equation}
\begin{equation}
  \frac{\partial (hv)}{\partial t} + \frac{\partial}{\partial x}(huv)
  + \frac{\partial}{\partial y}(hv^2 + \frac{1}{2} \beta_y h^2) =
  hs_y,
  \label{momentumy}
\end{equation}
where $h$ is the avalanche depth in the $z$ direction, and
$\bm{u} = (u, v)$ are the depth-averaged velocity components in the
downslope($x$) and cross-slope($y$) directions, respectively. The
factors $\beta_x$, $\beta_y$ are defined as
\[
  \beta_x = \epsilon \cos\zeta K_x, \quad \beta_y = \epsilon \cos\zeta
  K_y,
\]
respectively, where $\epsilon$ is aspect ratio of the characteristic
thickness to the characteristic downslope extent. Here $K_x$, $K_y$
are the $x$, $y$ directions earth pressure coefficients defined by the
Mohr-Coulomb yield criterion. Generally, the earth pressure
coefficients are considered in the active or passive state, which
depends on whether the downslope and cross-slope flows are expanding
or contracting \cite{Pirulli2007}. Hutter et al. assumed that $K_x$,
$K_y$ link the normal pressures in the $x$, $y$ directions with the
overburden pressure and suggested that \cite{Savage1991, Hutter1993}
\[
  K_{x_{act/pass}}=2\left(1\mp
    \sqrt{1-\frac{\cos^2\phi}{\cos^2\delta}}\right)\sec^2\phi - 1,
\]
\[
  K_{y_{act/pass}}=\frac{1}{2}\left(K_x + 1\mp \sqrt{(K_x -1)^2 + 4
      \tan^2 \delta}\right).
\]
where $\phi$ and $\delta$ are the internal and basal Coulomb friction 
angles, respectively. The subscripts ``act'' and ``pass'' denote 
active(``$-$'') and passive (``$+$'') stress by
\[
K_x = \left \{\begin{array}{lr}
K_{x_{act}}, & \partial u / \partial x \ge 0, \\
K_{x_{pass}}, & \partial u / \partial x < 0,
\end{array} \right.
\]
\[
  K_y = \left \{
    \begin{array}{llr}
      K_{y_{act}}^{x_{act}}, & \partial u/ \partial x \ge 0,
      & \partial v/ \partial y \ge 0,  \\ [2mm]
      K_{y_{pass}}^{x_{act}}, & \partial u/ \partial x \ge 0,
      & \partial v / \partial y < 0, \\ [2mm]
      K_{y_{act}}^{x_{pass}}, & \partial u/ \partial x < 0, & \partial
                                                              v/ \partial
                                                              y \ge 0,
      \\ [2mm]
      K_{y_{pass}}^{x_{pass}}, & \partial u/ \partial x < 0, & \partial v / \partial y < 0.
    \end{array}\right.
\]
In this model the earth pressure coefficients $K_x$, $K_y$ are assumed
to be functions of the velocity gradient.

The terms $s_x$, $s_y$ are the net driving accelerations in the $x$,
$y$ directions, respectively,
\[
  s_x = \sin\zeta - \frac{u}{|\bm{u}|} \tan\delta (\cos\zeta +\lambda
  \kappa u^2) - \epsilon \cos\zeta \frac{\partial z_b}{\partial x},
\]
\[
  s_y = -\frac{v}{|\bm{u}|}\tan \delta (\cos\zeta + \lambda \kappa
  u^2) - \epsilon \cos\zeta \frac{\partial z_b}{\partial y},
\]
where $|\bm{u}| = \sqrt{u^2 + v^2}$,
$\kappa = -\frac{\partial \zeta}{\partial x}$ is the local curvature
of the reference surface, and $\lambda \kappa$ is the local stretching
of the curvature.

The two dimensional Savage-Hutter equations
\eqref{mass}-\eqref{momentumy} can be collected in a general vector
form as
\begin{equation}
  \frac{\partial \bm{U}}{\partial t} + \frac{\partial \bm{F}(\bm{U})}
  {\partial x} + \frac{\partial \bm{G}(\bm{U})}{\partial y} =
  \bm{S}(\bm{U}), \label{cs}
\end{equation}
where $\bm{U}$ denotes the vector of conservative variables,
$\bm{\mathcal{F}} = (\bm{F}, \bm{G})$ represent the physical fluxes in
the $x$ and $y$ directions, respectively, and $\bm{S}$ is the source
term. They are
\begin{equation}
\begin{aligned}
  \bm{U} = \left[
    \begin{array}{c}
      h \\ hu \\ hv
    \end{array}\right], \quad \quad 
  \bm{F} = \left[
    \begin{array}{c}
      hu \\ hu^2 + \frac{1}{2}\beta_x h^2 \\ huv
    \end{array}\right],  \\
  \bm{G} = \left[
    \begin{array}{c}
      hv \\ huv \\ hv^2 + \frac{1}{2}\beta_y h^2
    \end{array}\right], \quad\quad 
  \bm{S}=\left[
    \begin{array}{c}
      0 \\ hs_x \\ hs_y
    \end{array} \right].
\end{aligned} \label{cs2}
\end{equation}
The flux for \eqref{cs} and \eqref{cs2} along a direction
$\bm{n} = (n_x, n_y)$ is
$\bm{\mathcal{F}} \cdot \bm{n} = \bm{F}n_x +\bm{G}n_y$, while $\bm{n}$
is a unit vector. The Jacobian of the flux along $\bm{n}$ is given by
\[
\bm{J} = \frac{\partial(\bm{F}n_x +\bm{G}n_y )}{\partial \bm{U}} = 
\left[\begin{array}{ccc}
0 & n_x & n_y \\
(\beta_x h-u^2)n_x-uvn_y & 2un_x + vn_y & un_y\\
-uvn_x +(\beta_yh-v^2)n_y  & vn_x & un_x + 2vn_y
\end{array}\right],
\]
where $n_x$ and $n_y$ are the components of the unit vector in the
$x$ and $y$ directions, respectively. The three eigenvalues of the
flux Jacobian are
\[
\begin{aligned}
  \lambda_1 & = u_{\bm{n}} -\sqrt{h(\beta_x n_x^2 + \beta_y 
  n_y^2)},  \\
  \lambda_2 & = u_{\bm{n}}, \\
  \lambda_3 & = u_{\bm{n}} + \sqrt{h(\beta_x n_x^2 + \beta_y 
  	n_y^2)}
\end{aligned}
\]
where $u_{\bm{n}} = \bm{u} \cdot \bm{n}$.

For laboratory avalanches, the ranges of $\delta$ and $\phi$ are
usually made $\beta_x$ and $\beta_y$ greater than zero
\cite{Hutter2005, Pudasaini2007}, thus, the eigenvalues of the flux
Jacobian are all real and are distinct if the avalanche depth $h>0$.
Therefore the Savage-Hutter system given by \eqref{cs} and \eqref{cs2}
is hyperbolic if the gradient of velocity components are not
zeros.

Clearly, the Savage-Hutter equations have a very similar mathematical
structure to the shallow water equations of hydrodynamics at a first
glance. But as a matter of fact, the derivation of Savage-Hutter
equations are not the same as those in the shallow water approximation
although they both start from the incompressible Navier-Stokes
equation. Meanwhile, on account of the jump in the earth pressure
coefficients $K_{x_{act/pass}}$ and $K_{y_{act/pass}}$, the complex
source term $s_x$ and $s_y$, the free surface at the front and rear
margins, it can be quite complicated to develop an appropriate
numerical method to solve the Savage-Hutter equations.

It is a direct observation that the Savage-Hutter equations is not
Galinean invariant. Precisely, it is not rotational invariant if
$\beta_x \neq \beta_y$. It is well-known that the shallow water
equations is rotational invariant. This is an essential difference
between the shallow water equations and the Savage-Hutter
equations. Let us point out this fact by
\begin{theorem}\label{theorem1}
  For any unit vector $\bm{n}=(\cos{\theta}, \sin{\theta})$,
  $\theta \neq 0$, and all vectors $\bm{U}$, the following equality
  \begin{equation}
    \cos\theta \bm{F}(\bm{U}) + \sin\theta \bm{G}(\bm{U}) = 
    \bm{T}^{-1}\bm{F}(\bm{TU}) \label{rotational}
  \end{equation}
  holds if and only if $\beta_x = \beta_y$, where the rotation matrix
  $\bm{T}$ is as
  \[
    \bm{T}=\left[
      \begin{array}{ccc}
        1 & 0 & 0 \\
        0 & \cos\theta & \sin\theta \\
        0 & -\sin\theta  & \cos\theta
      \end{array}\right].
  \]
\end{theorem}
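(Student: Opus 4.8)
The plan is to verify \eqref{rotational} by a direct componentwise computation, writing $\bm{U}=(h,hu,hv)^{T}$ and carefully tracking how the velocity pair $(u,v)$ rotates. First I would compute $\bm{TU}=(h,\,h\tilde u,\,h\tilde v)^{T}$ with $\tilde u=u\cos\theta+v\sin\theta$ and $\tilde v=-u\sin\theta+v\cos\theta$, and note that $\tilde u=\bm{u}\cdot\bm{n}=u_{\bm n}$. Plugging into \eqref{cs2} gives $\bm{F}(\bm{TU})=(h\tilde u,\ h\tilde u^{2}+\tfrac12\beta_x h^{2},\ h\tilde u\tilde v)^{T}$, and since $\bm{T}$ is orthogonal, $\bm{T}^{-1}=\bm{T}^{T}$, so $\bm{T}^{-1}\bm{F}(\bm{TU})$ is obtained by leaving the first entry alone and applying the $2\times2$ rotation block $\bigl[\begin{smallmatrix}\cos\theta&-\sin\theta\\ \sin\theta&\cos\theta\end{smallmatrix}\bigr]$ to the last two entries.

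Next I would expand the left-hand side $\cos\theta\,\bm{F}(\bm{U})+\sin\theta\,\bm{G}(\bm{U})$ from \eqref{cs2} and match it against $\bm{T}^{-1}\bm{F}(\bm{TU})$ entry by entry. The first entries coincide at once, since $hu\cos\theta+hv\sin\theta=h\tilde u$. For the two momentum entries I would split each flux into its convective part (the terms quadratic in the velocities) and its pressure part (the $\tfrac12\beta h^{2}$ terms). The convective parts agree identically: on the left they combine to $h\,u_{\bm n}\,(u,v)^{T}$, using $\cos\theta\,u^{2}+\sin\theta\,uv=u\tilde u$ and $\cos\theta\,uv+\sin\theta\,v^{2}=v\tilde u$; on the right, $h\tilde u\,(\tilde u,\tilde v)^{T}$ is carried by the rotation block back to $h\tilde u\,(u,v)^{T}=h\,u_{\bm n}(u,v)^{T}$, i.e. the momentum flux is covariant exactly as in the shallow water case, and this uses nothing about $\beta_x,\beta_y$. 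Consequently \eqref{rotational} reduces to the equality of the pressure contributions alone,
\[
\tfrac12 h^{2}\begin{bmatrix}\beta_x\cos\theta\\ \beta_y\sin\theta\end{bmatrix}
=\tfrac12\beta_x h^{2}\begin{bmatrix}\cos\theta\\ \sin\theta\end{bmatrix},
\]
that is, to $(\beta_x-\beta_y)\,h^{2}\sin\theta=0$.

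From here both implications are immediate: if $\beta_x=\beta_y$ the displayed identity holds for every $\bm{U}$, hence \eqref{rotational} holds; conversely, imposing it for all $\bm{U}$ — in particular for some $h\neq0$ — together with $\sin\theta\neq0$ forces $\beta_x=\beta_y$. I do not expect a genuinely hard step here: the argument is essentially bookkeeping, and the only points requiring care are the clean separation of the pressure term from the convective term and the verification that the convective momentum term does transform correctly, since it is precisely this that localizes the breakdown of rotational invariance in the anisotropy $\beta_x\neq\beta_y$ rather than anywhere else. I would also remark that the hypothesis $\theta\neq0$ is used here in the form $\sin\theta\neq0$ (for the degenerate direction $\bm{n}=(-1,0)$ the reduced identity is vacuous and holds for any $\beta_x,\beta_y$).
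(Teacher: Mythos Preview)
Your proposal is correct and follows essentially the same direct-computation approach as the paper: compute $\bm{TU}$, then $\bm{F}(\bm{TU})$, then $\bm{T}^{-1}\bm{F}(\bm{TU})$, and compare componentwise with $\cos\theta\,\bm{F}(\bm{U})+\sin\theta\,\bm{G}(\bm{U})$. Your separation into convective and pressure contributions is a tidy way to organize the bookkeeping and makes transparent that the obstruction is precisely $(\beta_x-\beta_y)h^{2}\sin\theta$, but the underlying argument is the same as the paper's.
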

\begin{proof}
First we calculate $\bm{TU}$. The result is 
\[
 \bm{TU}=\left[\begin{array}{c} h \\
 h(u\cos{\theta}+v\sin{\theta})\\
 h(v\cos{\theta}-u\sin{\theta})
 \end{array}\right].
\]
Next we compute $\bm{F}(\bm{TU})$ and obtain
\[
 \bm{F}(\bm{TU})=\left[\begin{array}{c}
 h(u\cos{\theta}+v\sin{\theta}) \\
 \frac{h^2\beta_x}{2}+\frac{(hu\cos{\theta}+hv\sin{\theta})^2}{h}\\
 \frac{(hv\cos{\theta}-hu\sin{\theta})(hu\cos{\theta}+hv\sin{\theta})}{h}
 \end{array}\right].
\]
Then we apply the inverse rotation $\bm{T}^{-1}$ to $\bm{F}(\bm{TU})$
and get
\begin{equation}
\bm{T^{-1}F}(\bm{TU})=\left[\begin{array}{c}
h(u\cos{\theta}+v\sin{\theta}) \\
\frac{1}{2}h(2u^2+\beta_x)\cos{\theta}+huv\sin{\theta} \\
huv\cos{\theta}+\frac{1}{2}h(2v^2+h\beta_x)\sin{\theta}
\end{array}\right]. \label{RH}
\end{equation}
For the left hand of Eq. \eqref{rotational}, we have 
\begin{equation}
\cos\theta \bm{F}(\bm{U}) + \sin\theta \bm{G}(\bm{U}) = \left[ 
\begin{array}{c}
h(u\cos{\theta}+v\sin{\theta}) \\
\frac{1}{2}h(2u^2+\beta_x)\cos{\theta}+huv\sin{\theta} \\
huv\cos{\theta}+\frac{1}{2}h(2v^2+h\beta_y)\sin{\theta}
\end{array}\right].  \label{LH}
\end{equation}
Thus, if $\beta_x \ne \beta_y$, \eqref{RH} is not equal to
\eqref{LH}.	
\end{proof}

If the system is rotational invariant, one may solve a 1D Riemann
problem with $(h, h u_{\bm{n}})$, where
$u_{\bm{n}} = \bm{u} \cdot \bm{n}$, as variable and get a 1D numerical
flux $(f_h, f_m)$, then the numerical flux along $\bm{n}$ for 2D
problem is given as $(f_h, f_m \bm{n})$. This procedure is very
popular while it only works in case that
$\bm{\mathcal{F}} \cdot \bm{n} = \bm{T}^{-1} \bm{F}(\bm{TU})$
holds. The theorem tells us that this equality holds only when
$\theta = 0$ or $\beta_x = \beta_y$. In a finite volume method to be
investigated, $\bm{n}$ will be set as the unit normal of the cell
interfaces, that it can not always vanish $\theta$ in unstructured
grids. Therefore, if $\beta_x \neq \beta_y$, any approximate Riemann
solver based on such a procedure is out of the scope of our
candidate numerical flux.

%The system \eqref{cs}-\eqref{cs2} can be rewritten as
%\begin{equation}
%\frac{\partial \bm{U}}{\partial t} + \bm{A}\left[\begin{array}{c}
%\frac{\partial \bm{U}}{\partial x} \\
%\frac{\partial \bm{U}}{\partial y}
%\end{array}\right] = \bm{S}, \quad \quad \bm{A} = \left(\bm{A}_x, \bm{A}_y\right),
%\end{equation}
%where $\bm{A}_x$ and $\bm{A}_y$ are the Jacobians of $\bm{F}$ and $\bm{G}$, respectively,
%\begin{equation}
%\begin{aligned}
%\bm{A}_x = \frac{\partial \bm{F}}{\partial \bm{U}}=
%\left[\begin{array}{ccc}
%0 & 1 & 0 \\
%-u^2+\beta_x h & 2u & 0\\
%-uv & v & u
%\end{array}\right], \\
%\bm{A}_y = \frac{\partial \bm{G}}{\partial \bm{U}}=
%\left[\begin{array}{ccc}
%0 & 0 & 1 \\
%-uv & v & u\\
%-v^2+\beta_yh & 0 & 2vu
%\end{array}\right].
%\end{aligned}
%\end{equation}
%The eigenvalues of $\bm{A}_x$ and $\bm{A}_y$ are 
%\begin{equation}
%\lambda_1 = u,  \quad \lambda_2 = v, \quad 
%\lambda_{3,5} =u \pm \sqrt{\beta_x h},  \quad 
%\lambda_{4,6} = v \pm \sqrt{\beta_y h}.
%\end{equation}

%%% Local Variables:
%%% mode: latex
%%% TeX-master: "article"
%%% End:

\section{Numerical Method}
Let us discretize the Savage-Hutter equations \eqref{cs} and
\eqref{cs2} on the Delaunay triangle grids using a finite volume
Godunov-type approach. We will adopt the modified HLL approximate
Riemann Solver to calculate numerical fluxes across cell
interfaces. The MUSCL-Hancock method and ENO-type reconstruction
technique are adopted to achieve second-order accuracy in space and
time.

\subsection{Finite volume method}
Before introducing the cell-centered finite volume method, the entire
spatial domain $\Omega$ is subdivided into $N$ triangular cells
$\tau_i, i = 1, 2, \cdots N$. In each cell $\tau_i$, the conservative
variables of Savage-Hutter equations are discretized as piecewise
linear functions as
\[
	\left.\bm{U}^{n}\right|_{\tau_{i}}(\bm{x})=\bm{U}_{i}^{n}+
	\nabla \bm{U}_{i}^{n}\left(\bm{x}-\bm{x}_{i}\right)
\]
where $\bm{U}_{i}^{n}=\left(h_{i}^{n},(h u)_{i}^{n},(h
v)_{i}^{n}\right)^{T}$ is the cell average value, $\nabla
\bm{U}_{i}^{n}$ is the slope on $\tau_i$ and $\bm{x}_i$ is the
barycenter of $\tau_i$, the superscript $n$ indicates at time $t^n$.

To introduction the finite volume scheme, Eq. \eqref{cs} is 
integrated in a triangle cell $\tau_i$ 
\begin{equation}
	\frac{\partial}{\partial t}\int_{\tau_i}{\bm{U}}d\Omega +
	 \int_{\tau_i}\left(\frac{\partial \bm{F}}{\partial x} +
	 	 \frac{\partial \bm{G}}{\partial y} \right) d\Omega 
	 	= \int_{\tau_i}{\bm{S}}d\Omega.  \label{intcs}
\end{equation}
Applying Green's formulation, Eq.\eqref{intcs} becomes
\[
	\frac{\partial}{\partial t}\int_{\tau_i}{\bm{U}}d\Omega +
	\int_{\partial\tau_i}{\bm{\mathcal{F}} \cdot\bm{n}}ds 
	=\int_{\tau_i}{\bm{S}}d\Omega,
\]
where $\partial\tau_i$ denotes the boundary of the cell $\tau_i$;
$\bm{n}$ is the unit outward vector normal to the boundary; $ds$ is
the arc elements. The integrand $\bm{\mathcal{F}}\cdot\bm{n}$ is the
outward normal flux vector in which $\bm{\mathcal{F}} = \left[\bm{F},
\bm{G}\right]$. Define the cell average:
\[
	\bm{U}_i=\frac{1}{|\tau_i|}\int_{\tau_i}{\bm{U}}d\Omega,
\]
where $|\tau_i|$ is the area of the cell $\tau_i$.
The method achieves second-order accuracy by using the MUSCL-Hancock
method which consists of predictor and corrector steps. The procedure
of the predictor-corrector Godunov-type method goes as follows:

\textbf{Step 1: Predictor step}
\[
	\bm{U}_{i}^{n+1/2} = \bm{U}_i - \frac{\Delta 
	t}{2|\tau_i|}\sum_{j=1}^{3}{\int_{\partial\tau_i,j}{\bm{\mathcal{F}}(\bm{U}_{in}^{n})\cdot
	 \bm{n}_{j}}}ds + \frac{\Delta t}{2}\bm{S}_{i}^{n},
\]
where $\partial \tau_{i,j}$ is the $j$-th edge of $\tau_{i}$ with the
unit outer normal as $\bm{n}_{j}$, and $\bm{S}_{i}$ is source term
discretized by a centered scheme. The flux vector
$\bm{\mathcal{F}}(\bm{U}_{in}^{n})$ is calculated at each cell face
$\partial \tau_{i,j}$ after ENO-type piecewise linear reconstruction.
$\Delta t$ is the time step.

\textbf{Step 2: Corrector step}
\[
	\bm{U}_{i}^{n+1} = \bm{U}_{i}^{n} - \frac{\Delta 
	t}{|\tau_i|}\sum_{j=1}^{3}{\int_{\partial 
	\tau_i,j}{\bm{\mathcal{F}}_{j}^{*}(\bm{U}_{in}^{n+1/2},\bm{U}_{out}^{n+1/2})\cdot\bm{n}_{j}ds}
	 } + \Delta t \bm{S}_{i}^{n+1/2}.
\]
where the numerical flux vector
$\bm{\mathcal{F}}^{*}(\bm{U}_{in}^{n+1/2},\bm{U}_{out}^{n+1/2})\cdot\bm{n}$
is evaluated based on an approximate Riemann solver at each
quadrature point on the cell boundary. In the paper, we use HLL
approximate Riemann solver owe to its simplicity and stable
performance on wet/day interface.

\subsection{HLL approximate Riemann solver}
For a lot of hyperbolic systems, it uses the approximate Riemann
solvers to obtain the approximate solutions in most practical
situations. Since Savage-Hutter equations and shallow water equations
are very similar, we can directly extend approximate Riemann solvers 
of shallow water equations to Savage-Hutter equations. The HLL 
approximate Riemann solver can offer a simple way of dealing with dry 
bed situaitons and the determination of the wet/dry front velocities 
\cite{Fraccarollo2010}. Similar to shallow water equations, the 
numerical interface flux of HLL for Savage-Hutter equations is 
computed as follows
\[
	\bm{\mathcal{F}}^{*}\cdot \bm{n} = \left\{ \begin{array}{lc}
	\bm{\mathcal{F}}(\bm{U}_L)\cdot\bm{n},  & \mathrm{if} \quad s_L \ge 0, \\
	\dfrac{s_R \bm{\mathcal{F}}(\bm{U}_L)\cdot \bm{n} - s_L 
	\bm{\mathcal{F}}(\bm{U}_R)\cdot \bm{n} + s_R s_L(\bm{U}_R - 
	\bm{U}_L)}{s_R - s_L}, & \mathrm{if} \quad s_L \le 0 \le s_R, \\
	\bm{\mathcal{F}}(\bm{U}_R)\cdot\bm{n},  &  \mathrm{if} \quad s_R \le 0,
	\end{array}
	\right.
\]
where $\bm{U}_L = (h_L, (hu)_L, (hv)_L)$ and $\bm{U}_R = (h_R, 
(hu)_R, (hv)_R)$ are the left and right Riemann states for a local 
Riemann problem, respectively. $ s_L$ and $s_R$ are estimate of the 
speeds of the left and right waves, respectively. The key to the 
success of the HLL approach is the availiability of estimates for the 
wave speeds $s_L$ and $s_R$. For the Savage-Hutter equations, there 
are also dry/wet bed cases. Thus, in order to take the dry bed into 
account,  similar to shallow water equations, the left and right wave 
speeds are expressed as
\[
	s_L = \left \{ \begin{array}{lc}
	\min(\bm{u}_L\cdot \bm{n} - \sqrt{c_L h_L}, 
	u_{*} - \sqrt{c_L h_{*}}), & \mathrm{if} \quad h_L, h_R > 0, 
	\\
	\bm{u}_{L}\cdot\bm{n} - \sqrt{c_L h_L}, & \mathrm{if}
	\quad 
	h_R = 0, \\
	\bm{u}_{R}\cdot\bm{n} - 2\sqrt{c_R h_R}, & \mathrm{if}
	\quad h_L = 0,
	\end{array}
	\right. 
\]

\[
	s_R =  \left \{ \begin{array}{lc}
	\max(\bm{u}_R\cdot \bm{n} + \sqrt{c_R h_R}, 
	u_{*} + \sqrt{c_R h_{*}}), & \mathrm{if} \quad h_L, h_R > 0, 
	\\ [2mm]
	\bm{u}_{L}\cdot\bm{n} + 2\sqrt{c_L h_L}, & \mathrm{if} 
	\quad h_R = 0, \\ [2mm]
	\bm{u}_{R}\cdot\bm{n} + \sqrt{c_R h_R}, & \mathrm{if}
	\quad 
	h_L = 0,
	\end{array}
	\right.
\]
where $\bm{u}_L = (u_L, v_L)^{T}$, $\bm{u}_R = (u_R, v_R)^{T}$, 
$c_L = (\beta_x n_x^2 + \beta_y n_y^2)_L$, $c_R = (\beta_x n_x^2 + 
\beta_y n_y^2)_R$ and 
\[
	\left\{ \begin{array}{lc}
	u_{*} = \frac{1}{2}(\bm{u}_L + \bm{u}_R)\cdot \bm{n} + 
	\sqrt{c_L h_L} - \sqrt{c_R h_R} \\ [2mm]
	h_{*} = 
	\frac{1}{\bm{\beta}\cdot\bm{n}}\left[\frac{1}{2}(\sqrt{c_L h_L}
	 + \sqrt{c_R h_R}) + \frac{1}{4}(\bm{u}_L - 
	\bm{u}_R)\cdot\bm{n}\right]^{2}.
	\end{array}
	\right.
\]

\subsection{Linear reconstruction}
In order to get second order accuracy in spatial, a simple ENO-type 
piecewise linear reconstruction was applied to suppress the numerical 
oscillations near steep gradients or discontinuites. This 
reconstrction method had been used by Deng et. al. for shallow water 
equations \cite{Deng2013}.

\begin{figure}[htbp] 
		\centering 
		\includegraphics[width=0.3\textwidth]{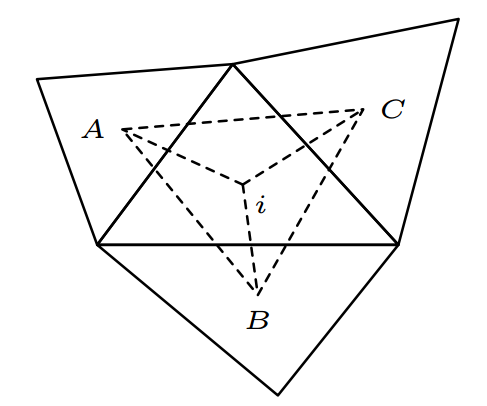} 
		\caption{A schematic of the ENO-type reconstruction on patch of cells (reproduced from \cite{Deng2013}).}%图片的名称 
		\label{fig:ENO} 	
\end{figure}

This part is mainly taken from \cite{Deng2013}.
For a variable $\psi=h, hu$ or $hv$, assuming its cell average value 
$\psi_i$ on $\tau_i$ is given (see Fig. \ref{fig:ENO}), then we can 
get the gradients of 
$\psi$ for the three triangles $\triangle AiB$, $\triangle BiC$ and 
$\triangle CiA$ are referred to as $(\nabla \psi)_{AiB}$, $(\nabla 
\psi)_{BiC}$ and $(\nabla \psi)_{CiA}$, and choose the one with the 
minimal $l^{2}$ norm as $\nabla \psi_i$, that is, 
\[
	\nabla \psi_{i}=\underset{\nabla 
	\psi}{\operatorname{min}}\left\{\|\nabla \psi\|_{l^{2}}, \nabla 
	\psi \in\left\{(\nabla \psi)_{A i B},(\nabla \psi)_{B i 
	C},(\nabla \psi)_{C i A}\right\}\right\}
\]

Acoording to the idea of minmod slope limiter, we let $\nabla \psi_i 
= 0$ when $\psi_i \geq \max{\{\psi_A, \psi_B, \psi_C\}}$ or $\psi_i 
\leq \min{\{ \psi_A, \psi_B, \psi_C\}} $. Meanwhile, in the process 
of reconstructing $h$, we must consider the physical criteria, that 
is, the avalanche depth $h$ must be non-negative at each quadrature 
points, otherwise $\nabla h_i$ is set to be zero.

\subsection{Time stepping}
The time-marching formula is explicit, and the time step length 
$\Delta t$ is determined by the Courant-Friedrichs-Lewy(CFL) 
condition for its stability. For triangular grid and implemention of 
CFL condition, the time step length is usually expressed as
\[
	\Delta t=\mathrm{C_r} \cdot \min _{i} \frac{\Delta 
	x_{i}}{\sqrt{|\bm{\beta}| h_{i}}+\sqrt{u_{i}^{2}+v_{i}^{2}}}, 
	\quad i = 1, 2, \cdots, N
\]
where $\mathrm{C_r}$ is the Courant number specified in the range 
$0<\mathrm{C_r} \leq 1$, and $\mathrm{C_r} = 0.5$ is adopted in our 
simulations; $|\bm{\beta}| = \sqrt{\beta_x^2 + \beta_y^2}$, $N$ is 
the total number of cells, and $\Delta x_i$ is the size of $\tau_i$. 
For triangular grid, $\Delta x_i$ is usually calculated as the 
minimum barycenter-to-barycenter distance between $\tau_i$ and its 
adjacent cells. 

%\subsection{Boundary conditions}

%%% Local Variables:
%%% mode: latex
%%% TeX-master: "article"
%%% End:

\section{Numerical Examples}

In this section, three numerical examples were performed to verify 
the proposed 
scheme which implemented using C++ programming language based on the 
adaptive finite element package {\tt AFEPack}\cite{Li2006}. The initial 
triangular grids were generated by easymesh. Because the posteriori 
error estimator for $h$-adaptive method on unstructured grid was not 
the main goal of this work, we just applied the following local error 
indicator which advised by \cite{Deng2013}
\[
	\mathcal{E}_{\tau}=\mathcal{E}_{\tau}(h)+\mathcal{E}_{\tau}(h 
	u)+\mathcal{E}_{\tau}(h v)
\]
where
\[
	\mathcal{E}_{\tau}(\psi)=|\tau| \int_{\partial \tau} \left( 
	\frac{1}{\sqrt{|\tau|}}\left|\left[[I_{h} \psi]\right]\right| + 
	\left| \left[[ \nabla I_{h} \psi ]\right] \right| \right) 
	\mathrm{d\mathbf{l}}.
\]
in which $\tau$ is a cell in the mesh and $|\tau|$ is its area, 
$\left[[ \cdot \right]] $ denotes the jump of a variable across 
$\partial \tau$, and $I_h \psi$ is the piecewise linear numerical 
solution for $\psi = h, hu, hv$, respectively. 
\subsection{Dam break test cases with exact solution}
For this dam break problem, it was originally a one dimensional 
problem with analytical solutions involving a constant bed slope and 
a Coulomb frictional stress, thus, its governing equations can 
reduced as \cite{Juez2013}\cite{Zhai2015}
\[
	\left\{ \begin{array}{l}
	{\dfrac{\partial h}{\partial t} + \dfrac{\partial (hu)}{\partial x} 
	= 0 }  \\ [2mm]
	{\dfrac{(hu)}{\partial t} + \dfrac{(hu^2 + \frac{1}{2} g\cos{\zeta} 
	h^2)}{\partial x} = -gh\cos{\zeta}(\tan{\delta}-\tan{\zeta})}
	\end{array}
	\right.
\]
where $g$ is the gravitational constant, the analytical solution of a 
granular dam break problem is given  
\cite{Zhai2015}:
\[
	(h, \mathcal{U})=\left\{\begin{array}{ll}{\left(h_{0}, 0\right),} 
	& {\chi< -c_{0} t} \\ 
	{\left(\frac{h_{0}}{9}\left(2-\frac{\chi}{c_{0} t}\right)^{2}, 
	\frac{2}{3}\left(\frac{\chi}{t}+c_{0}\right)\right),} & {-c_{0} t 
	\leq \chi \leq 2 c_{0} t} \\ {(0,0),} & {\chi>2 c_{0} 
	t}\end{array}\right.
\]
where $c_0 = \sqrt{g h_0 \cos{\zeta}}$ and 
\[
	\chi=x+\frac{1}{2} g \cos \zeta(\tan \delta-\tan \zeta) t^{2},
\]
\[
	\mathcal{U}=u+g \cos \zeta(\tan \delta-\tan \zeta) t.
\]
In order to check the performance of the numerical schemes proposed 
in this work, we modified it to an equivalent two-dimensional problem 
as Ref.\cite{Zhai2015}. The computational domain is $[-12.8,12.8] 
\times[-1.6,1.6]$, the initial conditions are
\[
	\left\{\begin{array}{ll}{(h, u, v)=(10,0,0),} & {x<0} \\ 
	{(h, u, v)=(0,0,0),} & {x>0}\end{array}\right.
\]
and $\zeta=40^{\circ}, \delta = 24.5^{\circ} $.

Fig. \ref{fig::1D} plots the compute avalanche depth in comparison 
with the exact solutions at $t=0.1, 0.2, 0.3, 0.4$ and $0.5$. It can 
be seen that our numerical solutions are in agreement with exact 
solutions very well, which also verifies the robustness and validity 
of our numerical algorithm.

\begin{figure}[htbp] 
	\begin{minipage}[t]{0.45\linewidth} 
		\centering 
		\includegraphics[width=1.2\textwidth]{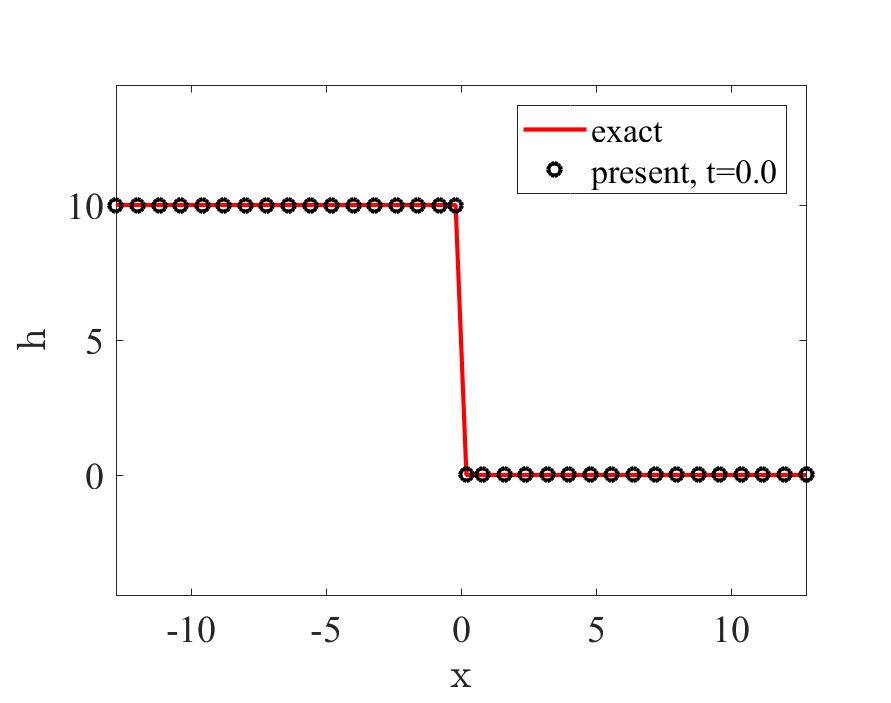} 
		%\caption{this is a figure4.}%图片的名称 
		%\label{fig:liuchengtu2} 
	\end{minipage}
	\hfill
	\begin{minipage}[t]{0.45\linewidth}%并排放两张图片，每张占行的0.4，下同 
		\centering %插入的图片居中表示 
		\includegraphics[width=1.2\textwidth]{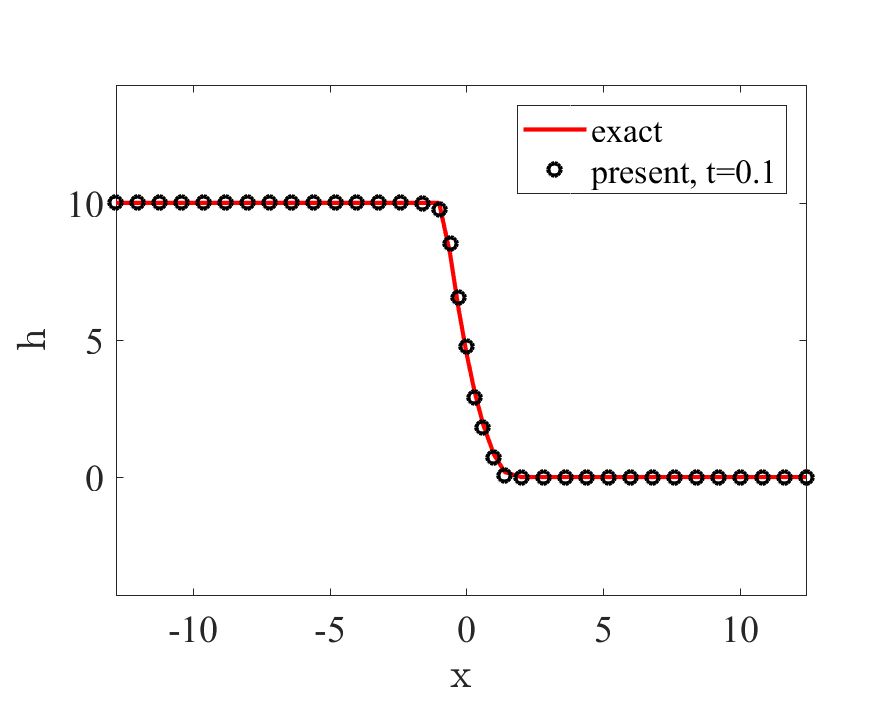}
		 %\caption{this is a figure3.}%图片的名称 
		 %\label{fig:liuchengtu1}%标签，用作 
		 \end{minipage} 
		
		 \begin{minipage}[t]{0.45\linewidth} 
		 \centering 
		 \includegraphics[width=1.2\textwidth]{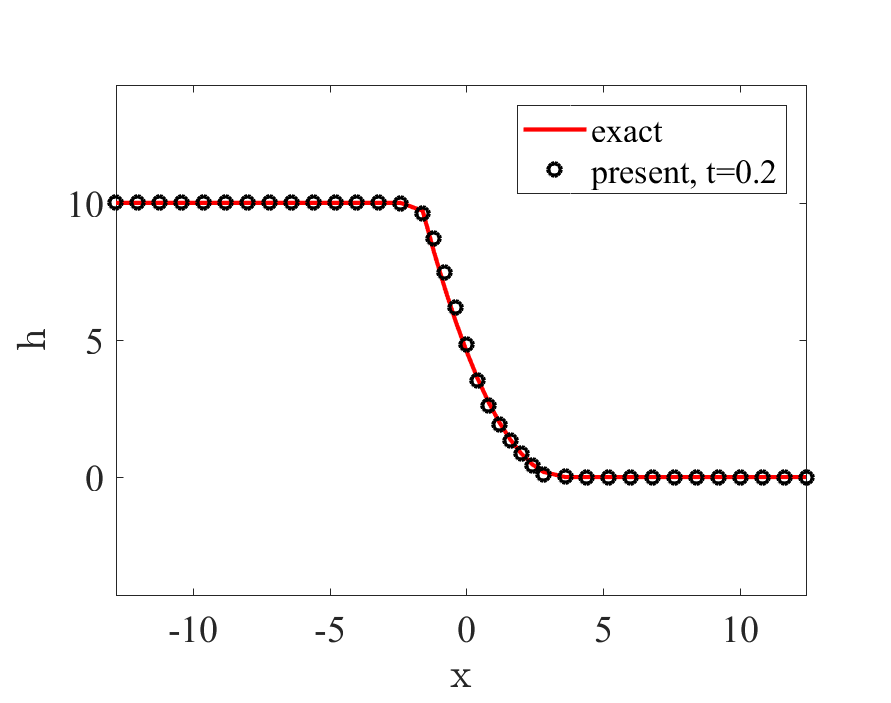} 
		 %\caption{this is a figure4.}%图片的名称 
		 %\label{fig:liuchengtu2} 
		\end{minipage}
		 \hfill  
		\begin{minipage}[t]{0.45\linewidth} 
			\centering 
			\includegraphics[width=1.2\textwidth]{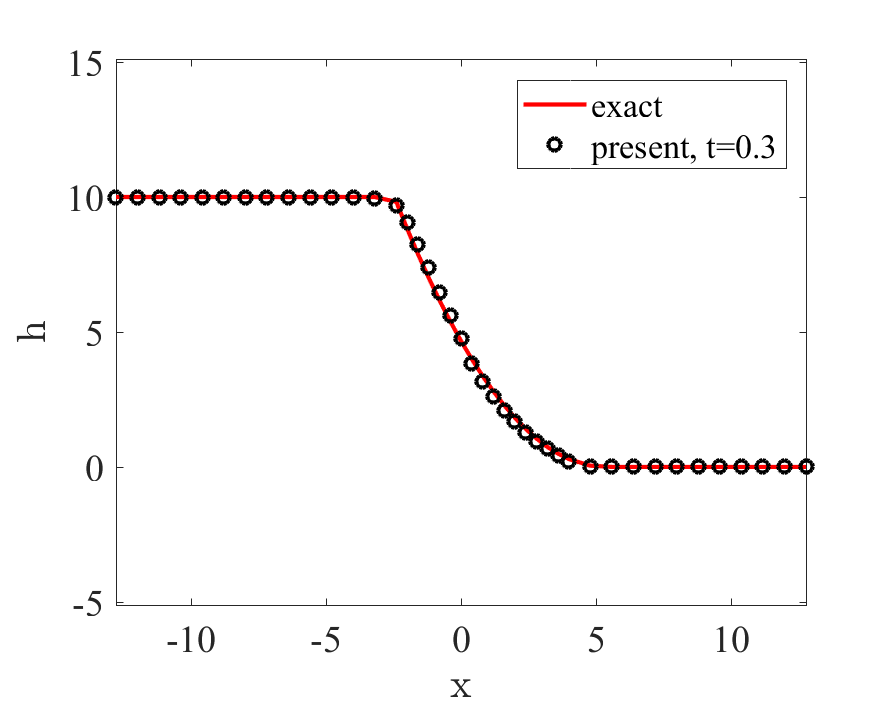} 
			%\caption{this is a figure4.}%图片的名称 
			%\label{fig:liuchengtu2} 
		\end{minipage}
		\begin{minipage}[t]{0.45\linewidth} 
			\centering 
			\includegraphics[width=1.2\textwidth]{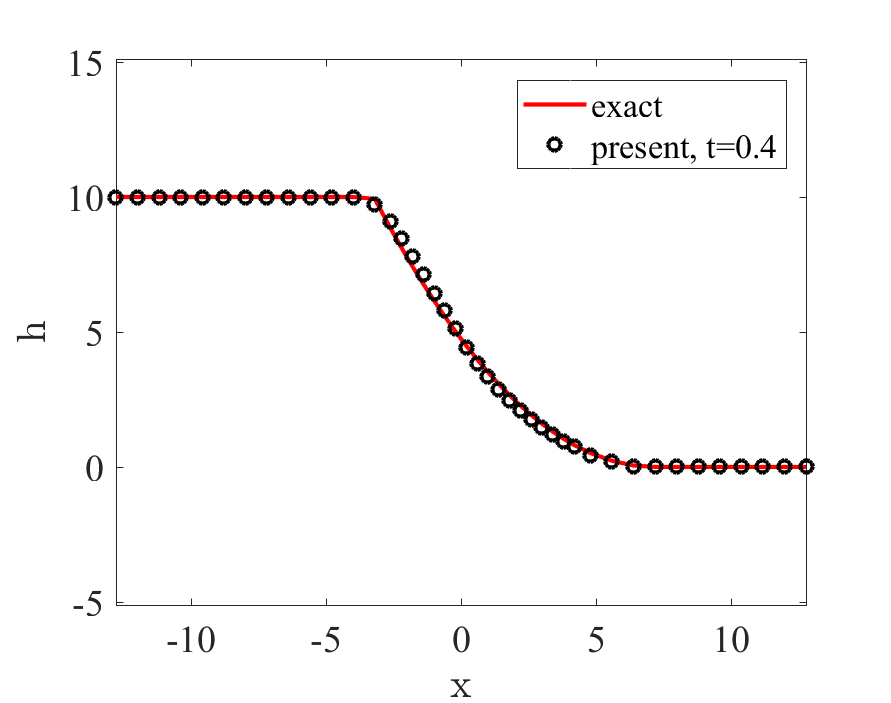} 
			%\caption{this is a figure4.}%图片的名称 
			%\label{fig:liuchengtu2} 
		\end{minipage}
		 \hfill 
		\begin{minipage}[t]{0.45\linewidth} 
			\centering 
			\includegraphics[width=1.2\textwidth]{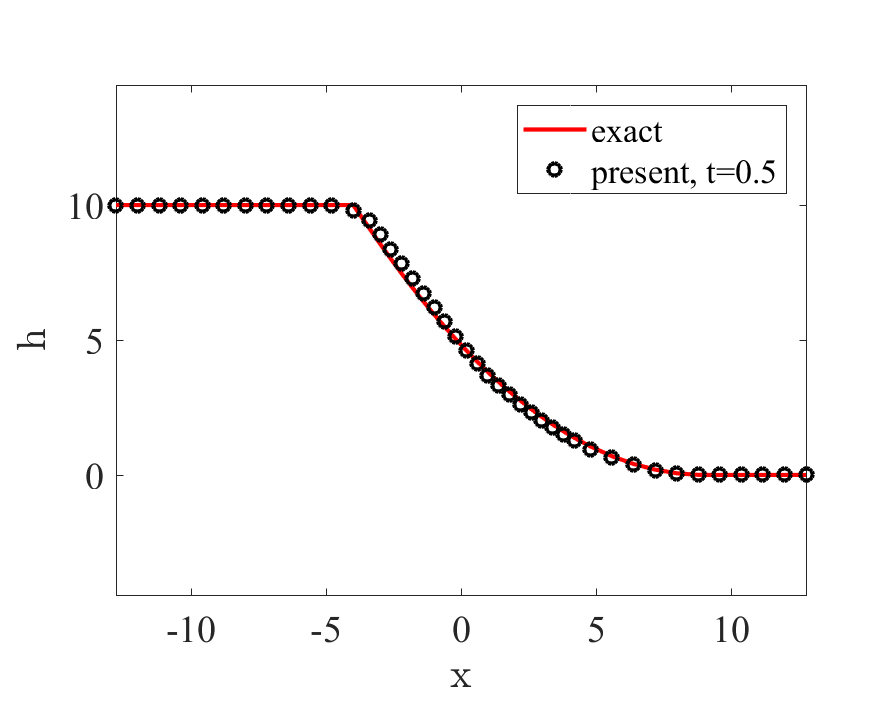} 
			%\caption{this is a figure4.}%图片的名称 
			%\label{fig:liuchengtu2} 
		\end{minipage}
		
		\centering
		\caption{Exact and numerical solutions for the 1D dam break problem at $t=0.0,0.1,0.2,0.3,0.4$ and $0.5$ }
		\label{fig::1D}
\end{figure}

\subsection{Avalanche flows slide down an inclined plane and merge 
	continuously into a horizontal plane}
In this section, we simulate an avalanche of finite granular mass 
sliding down an inclined plane and merging contiuously into a 
horizontal plane, which was considered in \cite{Wang2004} to verify 
their model's ability.

In order to compare with the results from Ref. 
\cite{Wang2004}\cite{Zhai2015}, all computational parameters are set 
to the same as they are. That is,
the computational domain is $[0,30]\times[-7, 7]$, and $\epsilon = 
\lambda = 1$ in our simulations. The inclined section lies $x \in [0, 
17.5]$ and the horizontal region lies $x \in [21.5, 30]$ with a 
smooth transition zone lies $x \in [17.5, 21.5]$. The inclination 
angle is given by
\[
	\zeta(x)=\left\{\begin{array}{ll}{\zeta_{0},} & {0 \leq x \leq 
	17.5} \\ {\zeta_{0}\left(1-\frac{x-17.5}{4}\right),} & 
	{17.5<x<21.5} \\ {0^{\circ},} & {x \geq 21.5} \end{array}\right.
\]
where $\zeta_{0} = 35^{\circ}$ and $\delta = \phi = 30^{\circ}$. The 
granular mass is suddenly released at $t=0$ from the hemispherical 
shell with an initial radius of $r_0 = 1.85$ in dimensionless length 
units. The center of the cap is initially located at $(x_0, y_0)=(4, 
0)$.

Fig. \ref{fig::ex2} shows the avalanche depth contours of the fluid 
at eight time slices as the avalanche slides on the inclined plane 
into the horizontal run-out zone. From the Fig. \ref{fig::ex2} it 
can be seen that, the avalanche starts to flow along the $x$ and $y$ 
direction due to gravity, and it flows faster along the downslope 
direction obviously. At $ t= 9$ the part of granular reaches the 
horizontal run-out zone and begins to deposite because of the basal 
friction is sufficient large, and  the tail is still doing the 
acceleration movement. Here, some small numerical oscillations are 
visible in the solutions which also found in 
Ref.\cite{Wang2004}\cite{Zhai2015} with orthogonal quadrilateral 
mesh.  At $ t= 12$ a shock wave 
develops round the end of the transition zone at $ x = 21.5$ and a 
surge wave is created. During the $t=12$ to $t=24$, most of graunlar 
mass are accumulated at the end of the transition zone and the 
front-end of the horizontal zone, meanwhile, a obvious backward surge 
can be found. At $t=24$ the final deposition of the avalanche is 
nearly formed and an expansion fan is observed.

On the whole, our numerical resluts are identical with these 
available from Ref. \cite{Wang2004}\cite{Zhai2015}. Fig. 
\ref{fig::ex2mesh} presents the mesh adaptive moving for the 
avalanche granular flows down the inclined plane and merges 
contiuously into a horizontal plane at $ t=6, 12, 18, 24$. It can be 
seen that the triangle mesh is refined at the free surface where 
gradients varied significantly.     

\begin{figure}[htbp] 
	\begin{minipage}[t]{0.45\linewidth}%并排放两张图片，每张占行的0.4，下同 
		\centering %插入的图片居中表示 
		\includegraphics[width=1.2\textwidth]{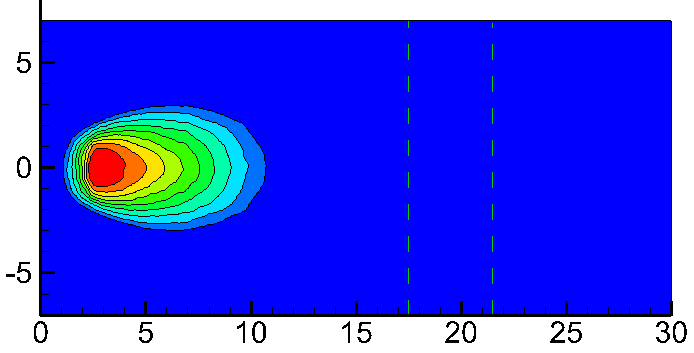}
		%\caption{this is a figure3.}%图片的名称 
		%\label{fig:liuchengtu1}%标签，用作 
	\end{minipage} 
	\hfill 
	\begin{minipage}[t]{0.45\linewidth} 
		\centering 
		\includegraphics[width=1.2\textwidth]{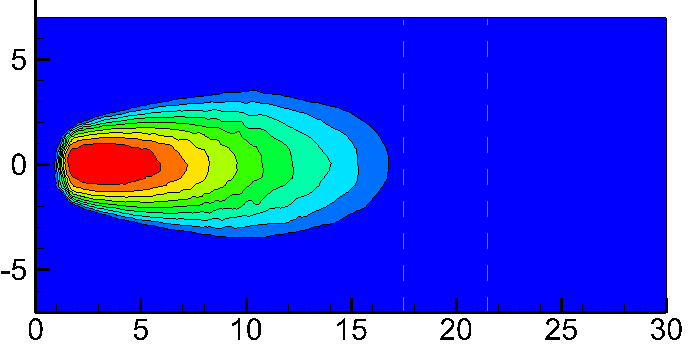} 
		%\caption{this is a figure4.}%图片的名称 
		%\label{fig:liuchengtu2} 
	\end{minipage} 
	\begin{minipage}[t]{0.45\linewidth} 
		\centering 
		\includegraphics[width=1.2\textwidth]{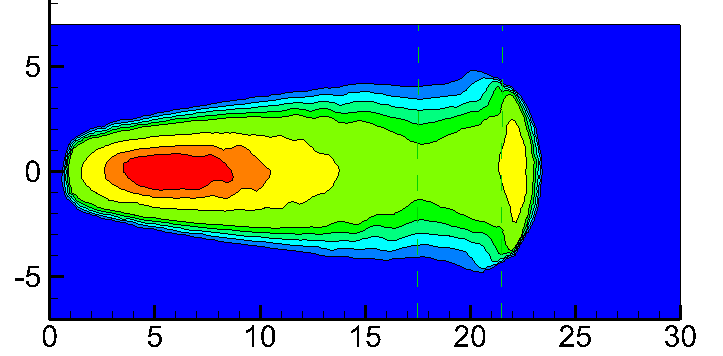} 
		%\caption{this is a figure4.}%图片的名称 
		%\label{fig:liuchengtu2} 
	\end{minipage}
	\hfill
	\begin{minipage}[t]{0.45\linewidth} 
		\centering 
		\includegraphics[width=1.2\textwidth]{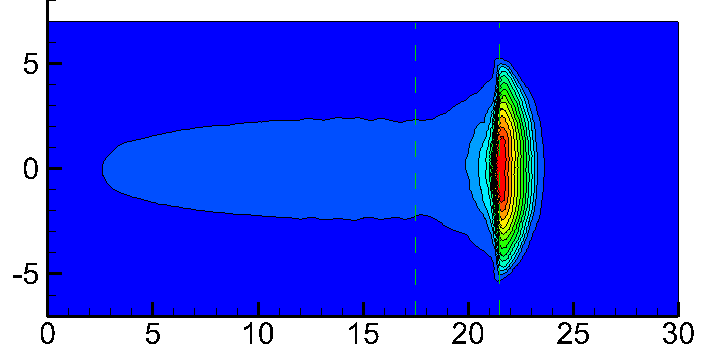} 
		%\caption{this is a figure4.}%图片的名称 
		%\label{fig:liuchengtu2} 
	\end{minipage}
	\begin{minipage}[t]{0.45\linewidth} 
		\centering 
		\includegraphics[width=1.2\textwidth]{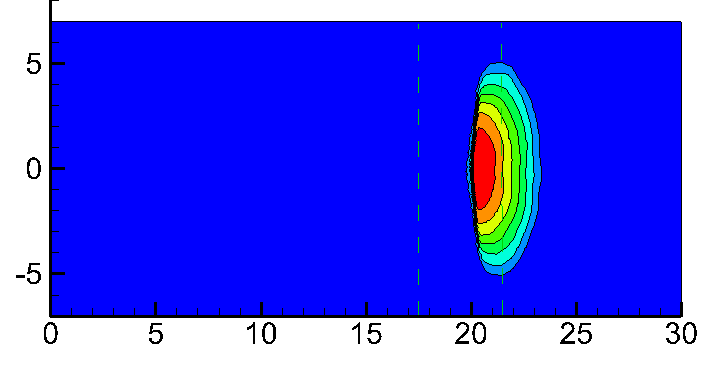} 
		%\caption{this is a figure4.}%图片的名称 
		%\label{fig:liuchengtu2} 
	\end{minipage}
	\hfill
	\begin{minipage}[t]{0.45\linewidth} 
		\centering 
		\includegraphics[width=1.2\textwidth]{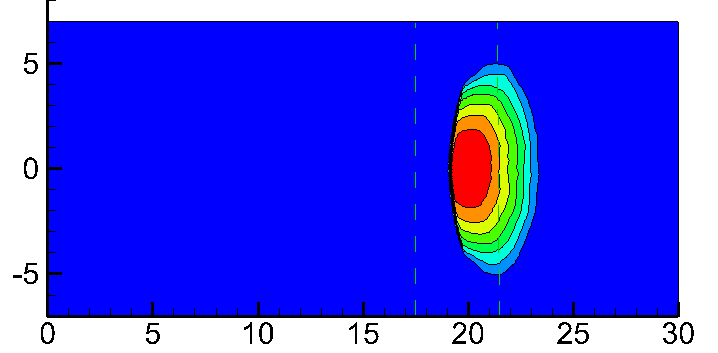} 
		%\caption{this is a figure4.}%图片的名称 
		%\label{fig:liuchengtu2} 
	\end{minipage}
	\begin{minipage}[t]{0.45\linewidth} 
		\centering 
		\includegraphics[width=1.2\textwidth]{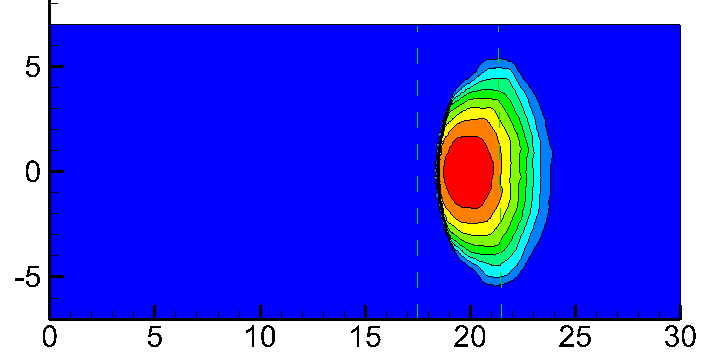} 
		%\caption{this is a figure4.}%图片的名称 
		%\label{fig:liuchengtu2} 
	\end{minipage}
	\hfill
	\begin{minipage}[t]{0.45\linewidth} 
		\centering 
		\includegraphics[width=1.2\textwidth]{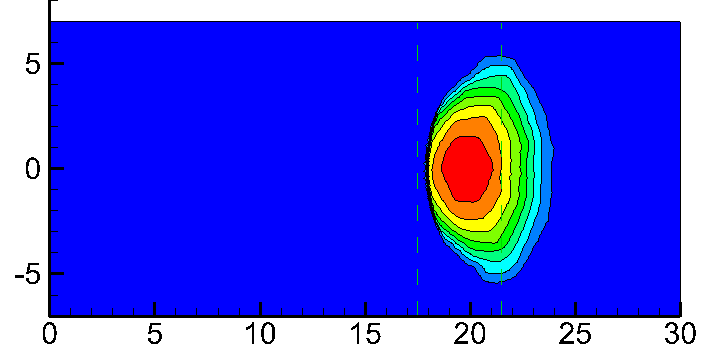} 
		%\caption{this is a figure4.}%图片的名称 
		%\label{fig:liuchengtu2} 
	\end{minipage}
	\centering
	\caption{Thickness contours of the avalanche at eight different 
	dimensionless times $t=3, 6, 9, 12, 15, 18, 21, 24$ for the flow 
	slides down the inclined plane and merging continuously into a 
	horizontal plane. The transition zone from the inclined plane to 
	the horizontal plane lies between the two green dashed lines.  }
	\label{fig::ex2}
\end{figure}

\begin{figure}
	\begin{minipage}[t]{0.45\linewidth}%并排放两张图片，每张占行的0.4，下同 
		\centering %插入的图片居中表示 
		\includegraphics[width=1.2\textwidth,trim=80 50 80 430,clip]{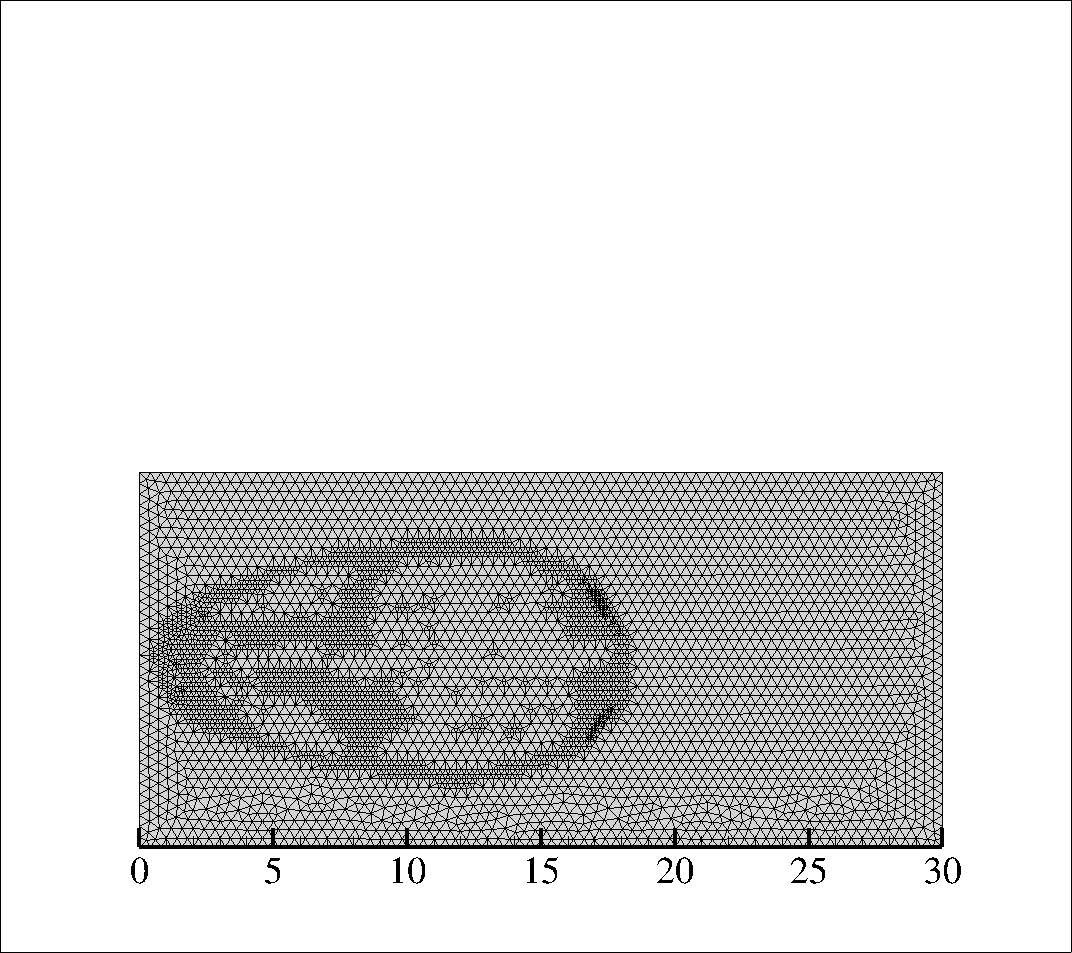}
		%\caption{this is a figure3.}%图片的名称 
		%\label{fig:liuchengtu1}%标签，用作 
	\end{minipage} 
	\hfill 
	\begin{minipage}[t]{0.45\linewidth} 
		\centering 
		\includegraphics[width=1.2\textwidth,trim=80 50 80 430,clip]{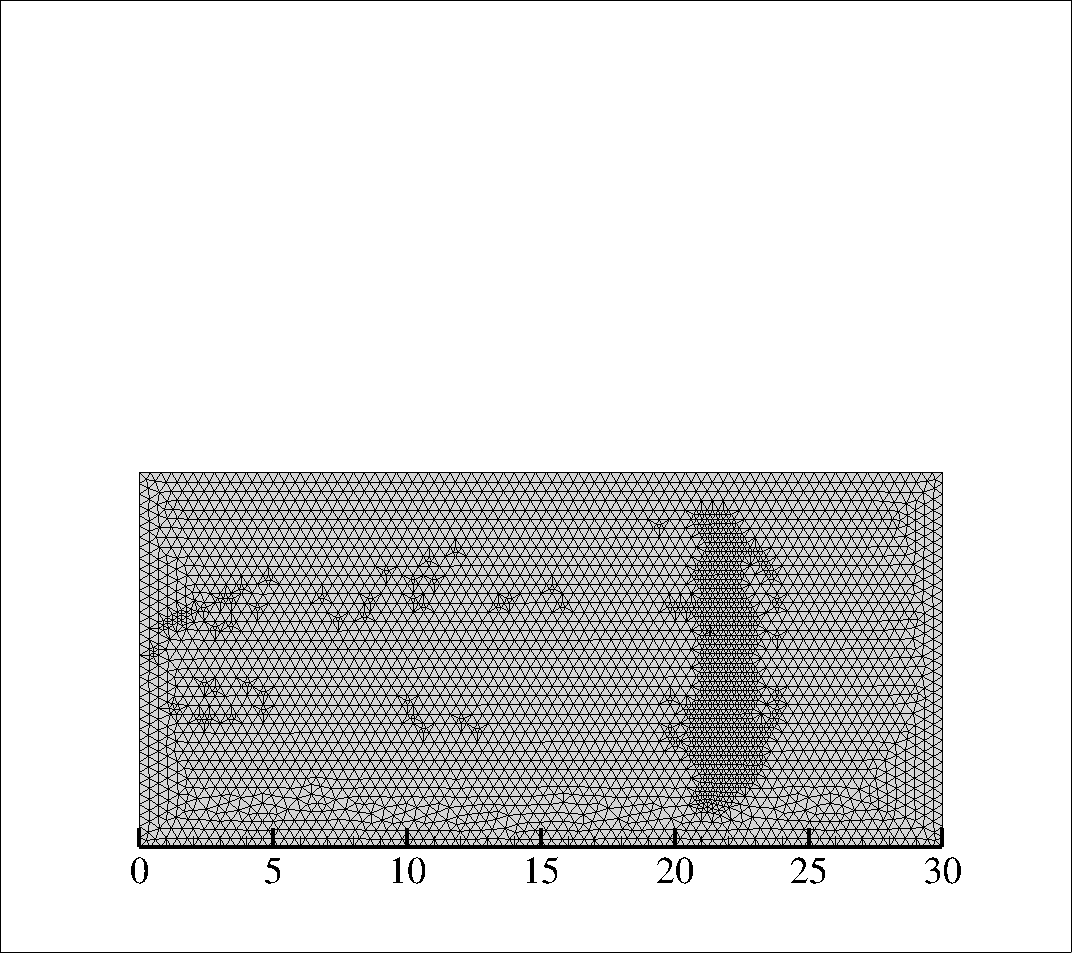}
		%\caption{this is a figure4.}%图片的名称 
		%\label{fig:liuchengtu2} 
	\end{minipage} 
	\begin{minipage}[t]{0.45\linewidth} 
		\centering 
		\includegraphics[width=1.2\textwidth,trim=80 50 80 430,clip]{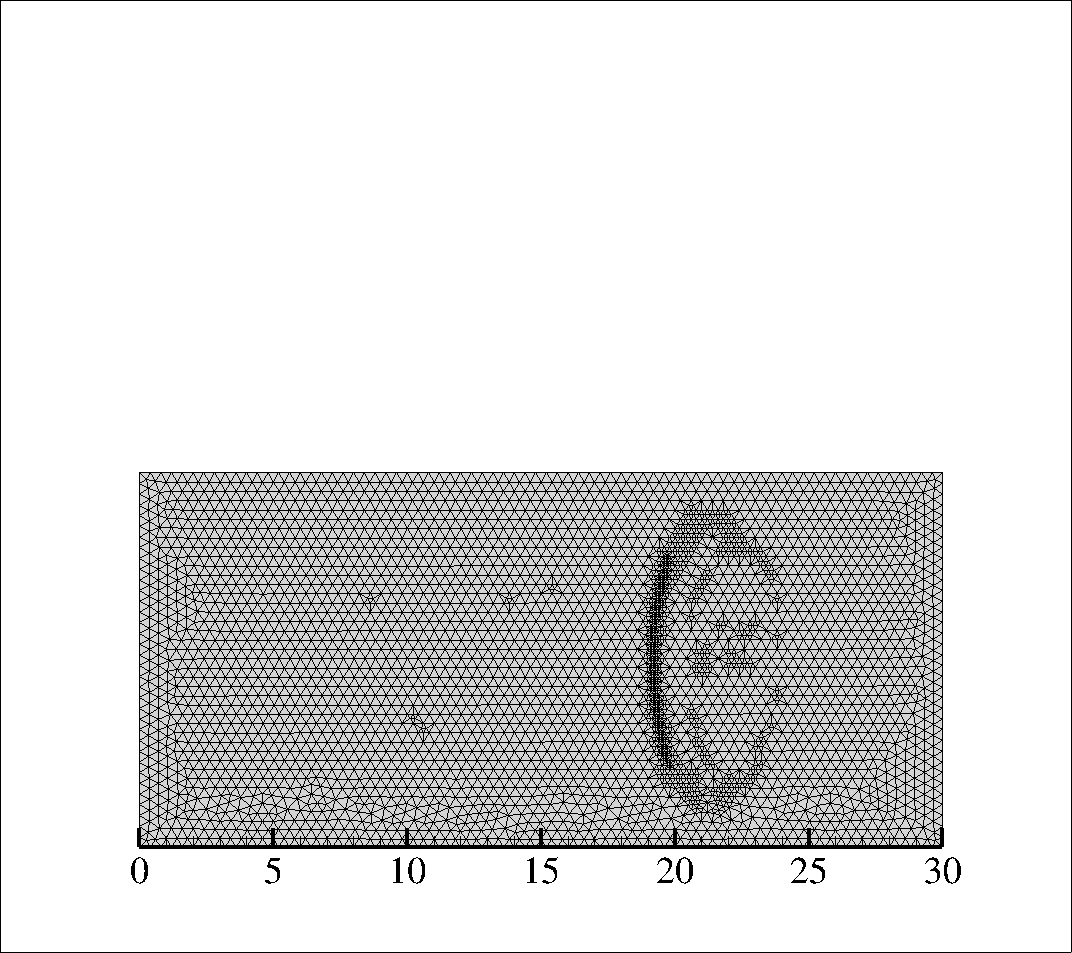} 
		%\caption{this is a figure4.}%图片的名称 
		%\label{fig:liuchengtu2} 
	\end{minipage}
	\hfill
	\begin{minipage}[t]{0.45\linewidth} 
		\centering 
		\includegraphics[width=1.2\textwidth,trim=80 50 80 430,clip]{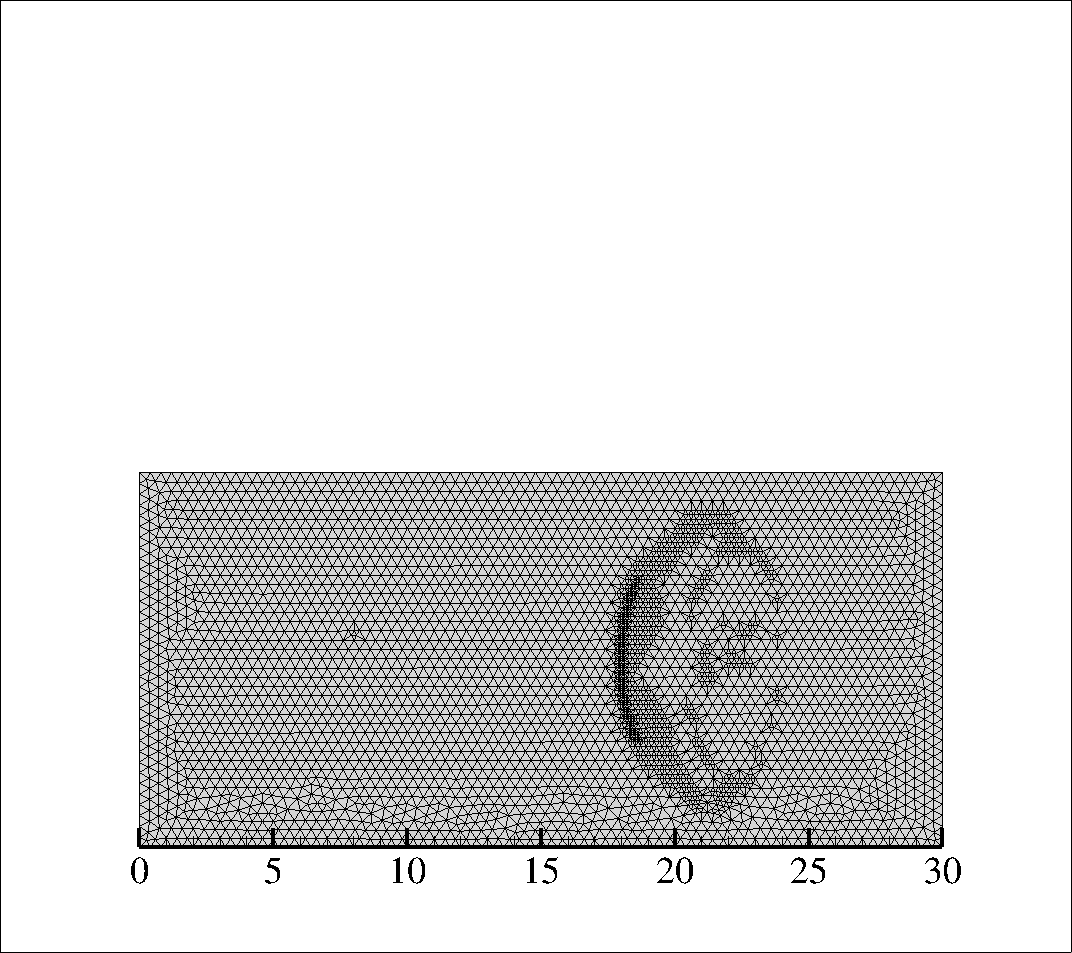}
		%\caption{this is a figure4.}%图片的名称 
		%\label{fig:liuchengtu2} 
	\end{minipage}
	\centering
	\caption{The triangular meshes at times $t=6, 12, 18, 24$ for the 
	flow slides down the inclined plane and merging continuously into 
	a horizontal plane.}
	\label{fig::ex2mesh}	
\end{figure}

\subsection{Granular flow pasts obstacle}
The research of granular avalanches around obstacle is of partiacular 
interest because some important pheomena such as shock waves, 
expansion fans and granular vacuum can be observed \cite{Cui2014}. 
Meanwhile, setting obstacles is an effective means to control the 
dynamic process of granular avalanche flow.  In this section, we 
present a simulation example of finite granular mass flows down an 
inclined plane and merges inito a horizontal run-out zone with a 
conical obstacle located on the inclined plane.  
In addition, in order to compared the effects of with and without 
obstacle, all compuational parameters (such as computational domain, 
the inclination angle $\zeta(x)$, $\phi$, $\delta$ ) are the same as 
the above example. A conical obstacle is incorporated into this model 
and the centre of obstacle is located at $(13,0)$ with dimensionless 
radius of bottom-circle of 1 and dimsionless heights $H=1$.

Fig. \ref{fig::ex3} shows a series of snap-shots of the thickness 
contours as the avalanche slides down the inclined plane which has a 
conical obstacle in the avalanche track.
It can be seen that the flow of granular is the same as that without 
obstacle before it reaches the obstacle (eg. for $t \leq 3$). At 
$t=6$ the granular avalanche has reache the obstacle, some of them 
climb the obstacle, and the other divide into two parts and flow 
around the obstacle downwards. At the moment, a stationary zone 
appears in the front of the obstacle, where the avalance velocity 
becomes zero, but it is followed by a great increase in the avalanche 
thickness. Meanwhile, behind the obstacle, a so-called granular 
vacuum is formed which can protect the zone directly behind it from 
disasters.   
At $t=9$, the granular mass continues to accelerate until it reaches 
the horizontal run-out zone where the basal friction is sufficient 
large, which results in the front becomes to rest, but the part of 
the tail accelerates further. At $t=12$ the shcoks waves fromed on 
the horizontal run-out zone during the front of granular mass 
deposits and the tail of it still slides accelerated.
From $t=15$ to $t=18$, most of the granualr mass has slided down 
either side of the obstacle and on the transition zone two expansion 
fans gradually formed. For $ t= 21, 24$ two separate and symmetrical 
deposition mass of granular are formed at the end of transition zone 
and the front of horizontal zone. From $t = 6$ to $ t = 24$ there 
always has  the flow-free region directly behind the obstacle, which 
can be regarded as the protected area in the practial application of 
avalanche protection.  Thus, the obstacle does change the path of 
granular avalanche flow movement and affects its final deposition 
pattern. The numerical results show that our numerical model has the 
ability of capturing key qualitative features, such as shocks wave 
and flow-free region. As in the previous example, Fig. 
\ref{fig::ex3mesh} also shows the adaptive mesh moving for the 
granular avalanche flows down with a obstacle at $ t=6, 12, 18, 24$. 
\begin{figure}[htbp] 
	\begin{minipage}[t]{0.45\linewidth}%并排放两张图片，每张占行的0.4，下同 
		\centering %插入的图片居中表签，用作
		\includegraphics[width=1.2\textwidth]{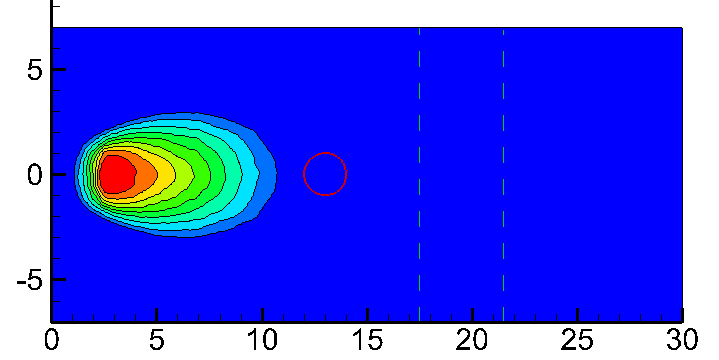} 
	\end{minipage} 
	\hfill 
	\begin{minipage}[t]{0.45\linewidth} 
		\centering 
		\includegraphics[width=1.2\textwidth]{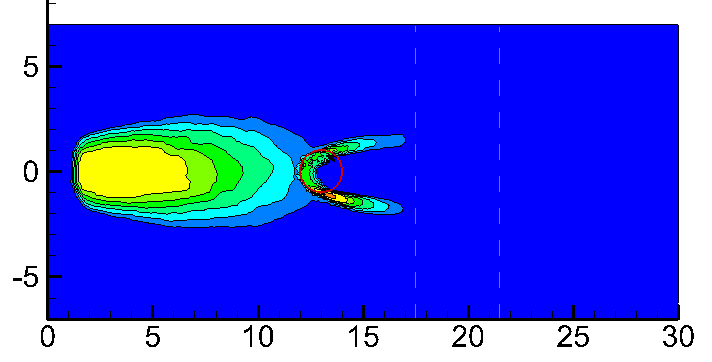} 
		%\caption{this is a figure4.}%图片的名称 
		%\label{fig:liuchengtu2} 
	\end{minipage} 
	\begin{minipage}[t]{0.45\linewidth} 
		\centering 
		\includegraphics[width=1.2\textwidth]{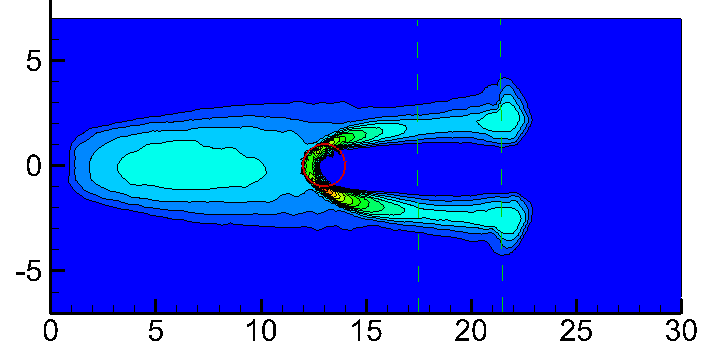} 
		%\caption{this is a figure4.}%图片的名称 
		%\label{fig:liuchengtu2} 
	\end{minipage}
	\hfill
	\begin{minipage}[t]{0.45\linewidth} 
		\centering 
		\includegraphics[width=1.2\textwidth]{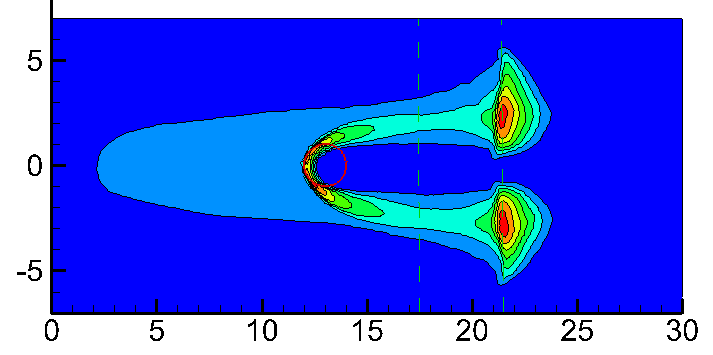} 
		%\caption{this is a figure4.}%图片的名称 
		%\label{fig:liuchengtu2} 
	\end{minipage}
	\begin{minipage}[t]{0.45\linewidth} 
		\centering 
		\includegraphics[width=1.2\textwidth]{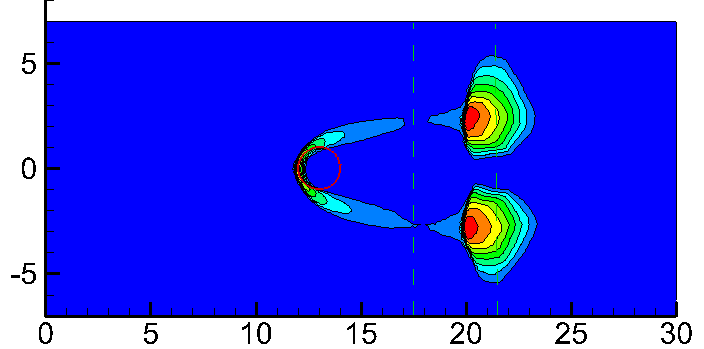} 
		%\caption{this is a figure4.}%图片的名称 
		%\label{fig:liuchengtu2} 
	\end{minipage}
	\hfill
	\begin{minipage}[t]{0.45\linewidth} 
		\centering 
		\includegraphics[width=1.2\textwidth]{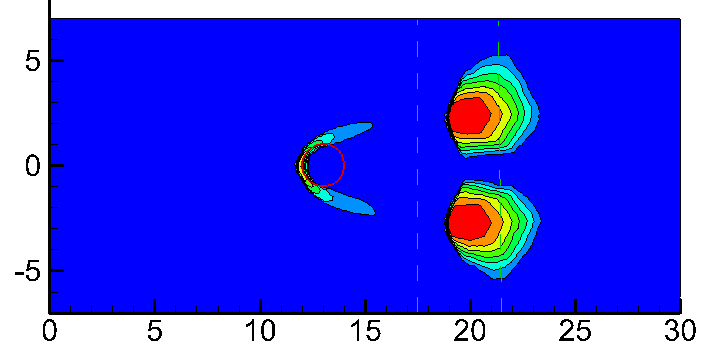} 
		%\caption{this is a figure4.}%图片的名称 
		%\label{fig:liuchengtu2} 
	\end{minipage}
	\begin{minipage}[t]{0.45\linewidth} 
		\centering 
		\includegraphics[width=1.2\textwidth]{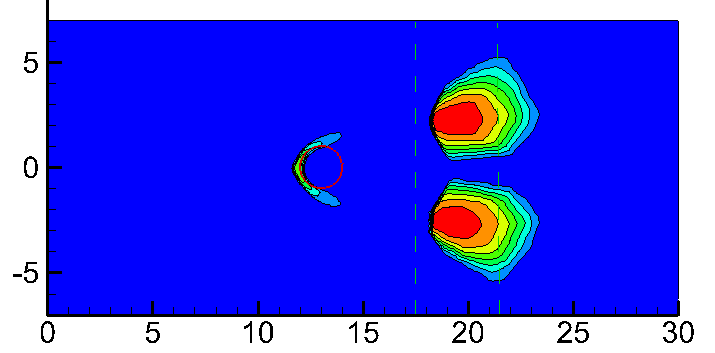} 
		%\caption{this is a figure4.}%图片的名称 
		%\label{fig:liuchengtu2} 
	\end{minipage}
	\hfill
	\begin{minipage}[t]{0.45\linewidth} 
		\centering 
		\includegraphics[width=1.2\textwidth]{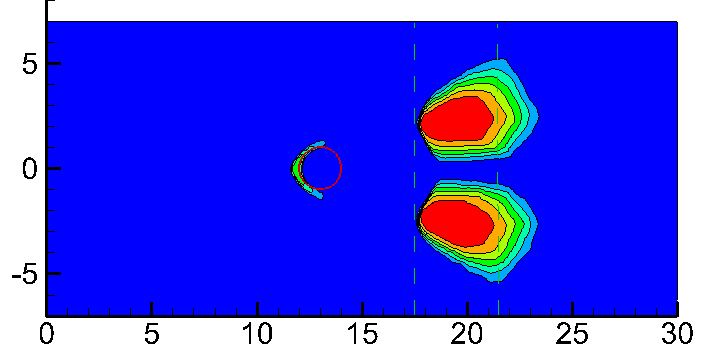} 
		%\caption{this is a figure4.}%图片的名称 
		%\label{fig:liuchengtu2} 
	\end{minipage}
	\centering
	\caption{Thickness contours of the avalanche at eight different 
		dimensionless times $t=3, 6, 9, 12, 15, 18, 21, 24$ for a 
		granular flow past a circular cone located on the inclined 
		plane and indicated by red solid lines with a maximum 
		dimensionless height of $H=1$. The transition zone from the 
		inclined plane to the horizontal plane lies between the two 
		green dashed lines.  }
	\label{fig::ex3}
\end{figure}

\begin{figure}
	\begin{minipage}[t]{0.45\linewidth}%并排放两张图片，每张占行的0.4，下同 
		\centering %插入的图片居中表示 
		\includegraphics[width=1.2\textwidth,trim=80 50 80 430,clip]{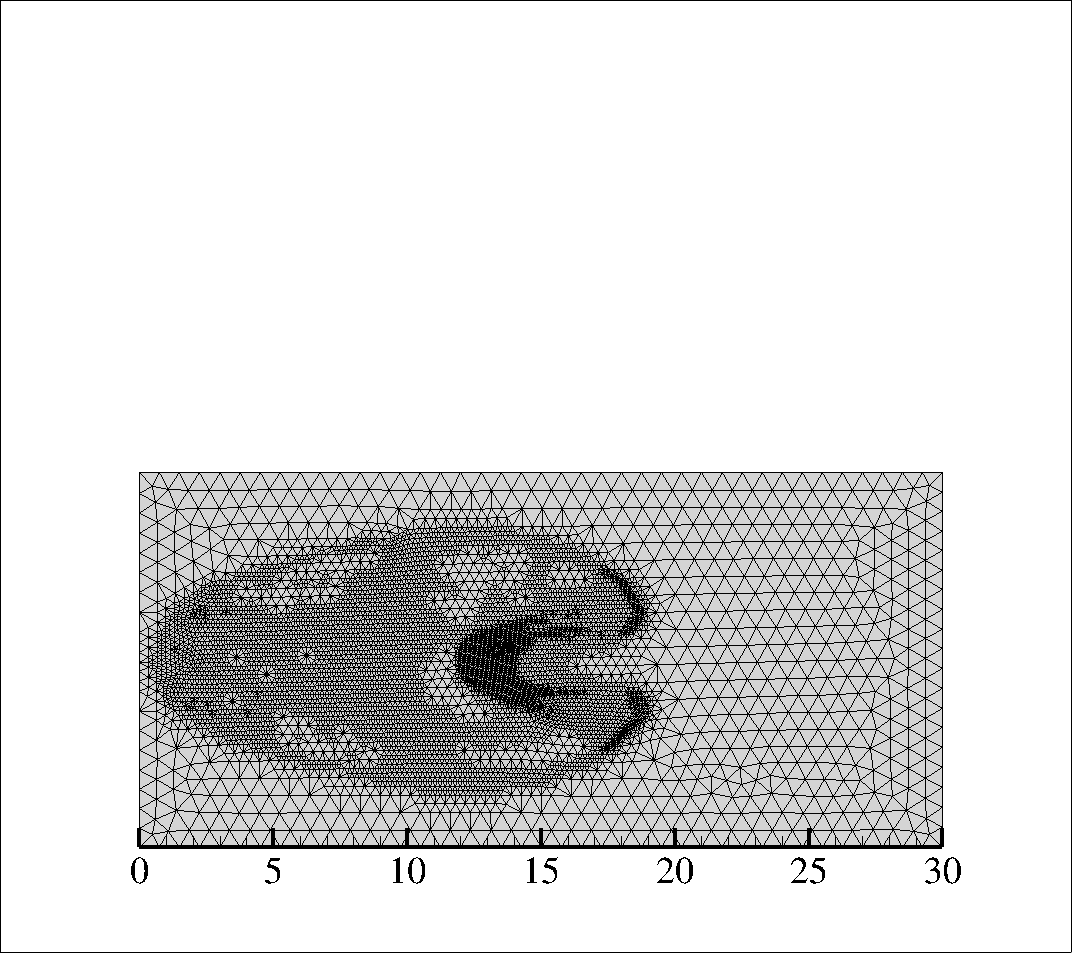}
		%\caption{this is a figure3.}%图片的名称 
		%\label{fig:liuchengtu1}%标签，用作 
	\end{minipage} 
	\hfill 
	\begin{minipage}[t]{0.45\linewidth} 
		\centering 
		\includegraphics[width=1.2\textwidth,trim=80 50 80 430,clip]{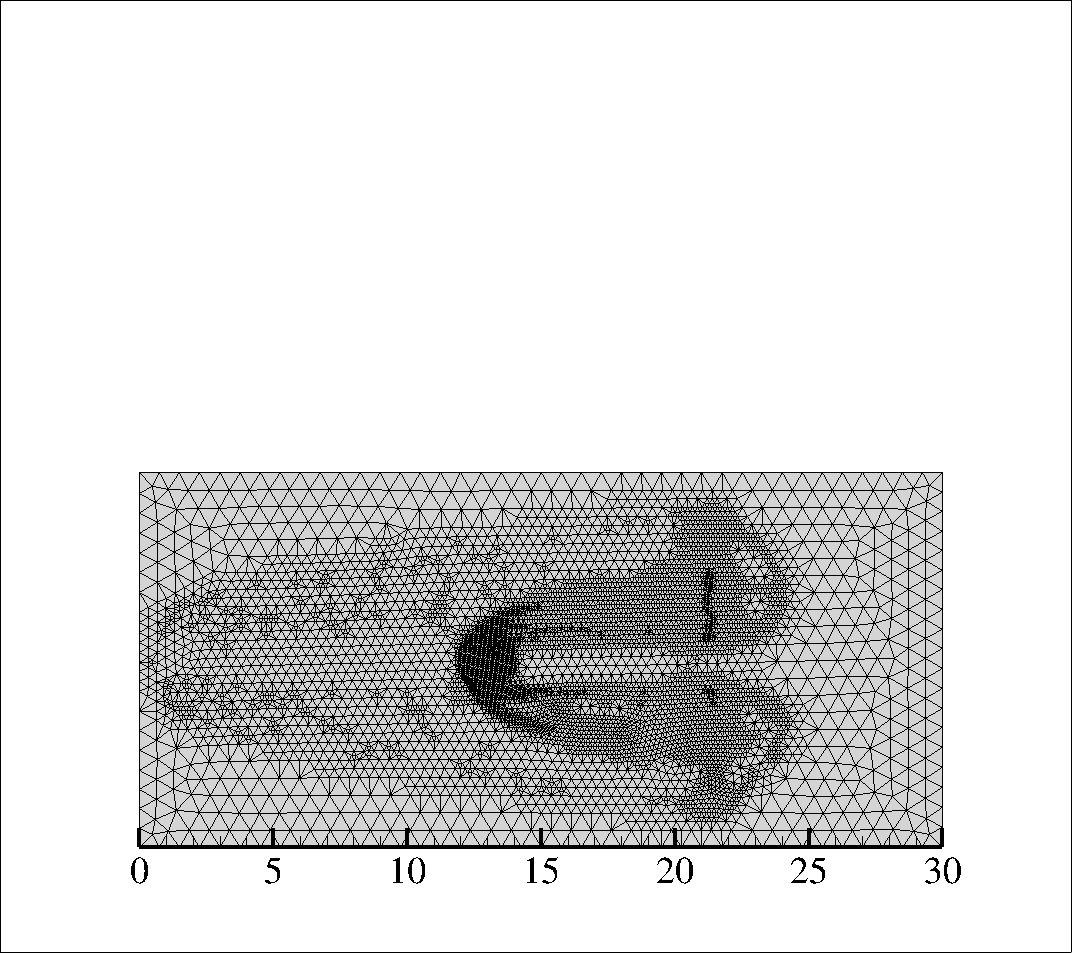}
		%\caption{this is a figure4.}%图片的名称 
		%\label{fig:liuchengtu2} 
	\end{minipage} 
	\begin{minipage}[t]{0.45\linewidth} 
		\centering 
		\includegraphics[width=1.2\textwidth,trim=80 50 80 430,clip]{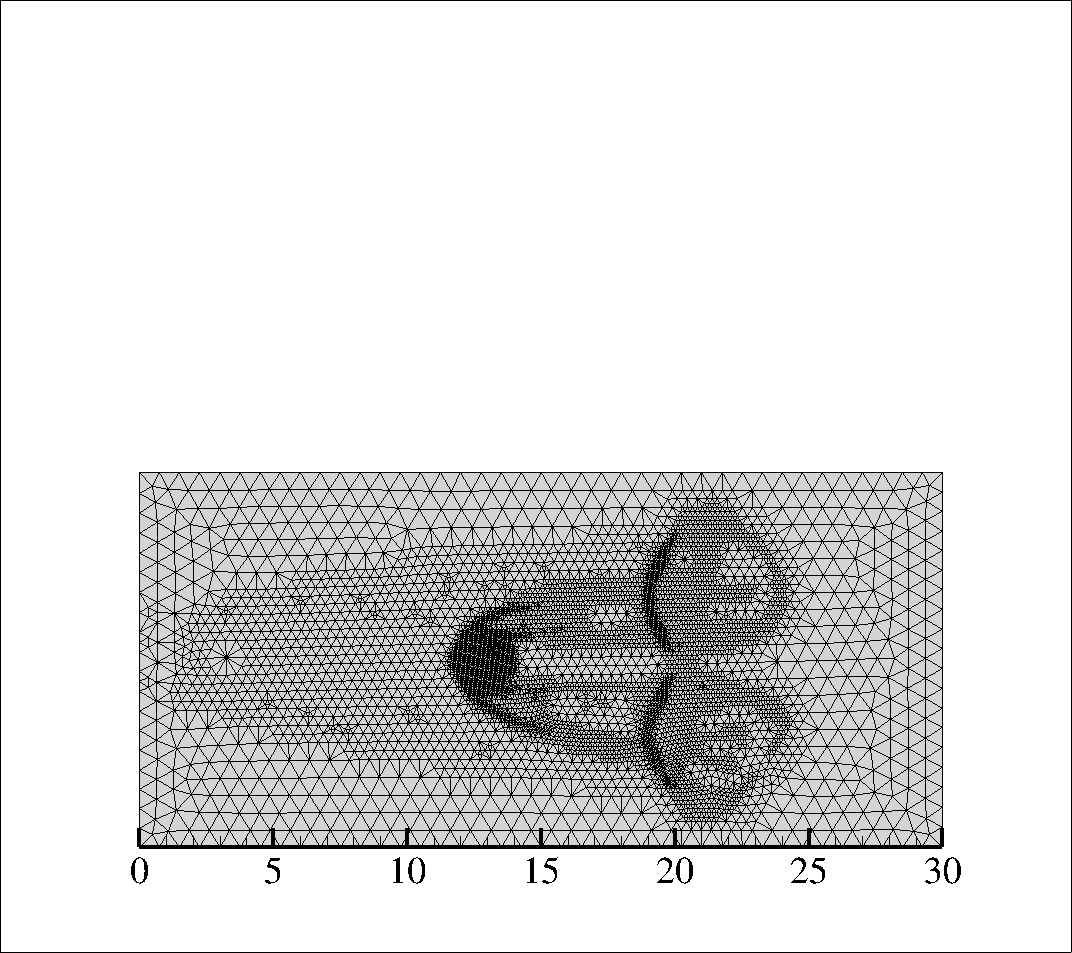} 
		%\caption{this is a figure4.}%图片的名称 
		%\label{fig:liuchengtu2} 
	\end{minipage}
	\hfill
	\begin{minipage}[t]{0.45\linewidth} 
		\centering 
		\includegraphics[width=1.2\textwidth,trim=80 50 80 430,clip]{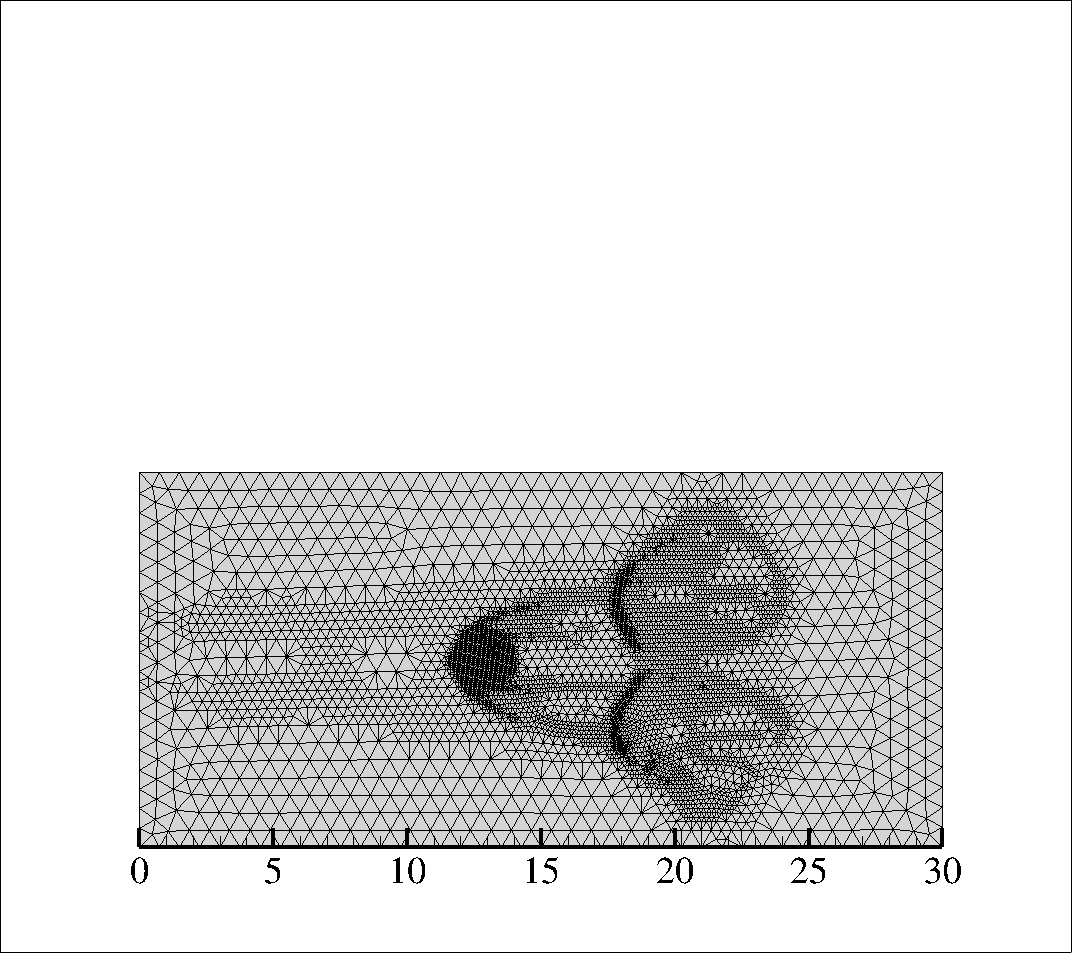}
		%\caption{this is a figure4.}%图片的名称 
		%\label{fig:liuchengtu2} 
	\end{minipage}
	\centering
	\caption{The triangular meshes at times $t=6, 12, 18, 24$ for the 
		flow slides down the inclined plane with a conical obstacle.}
	\label{fig::ex3mesh}	
\end{figure}

%%% Local Variables:
%%% mode: latex
%%% TeX-master: "article"
%%% End:

\section{Conclusions}
This paper is an attempt to solve the Savage-Hutter equations on
unstructred grids. Keeping in mind that the model is not rotational
invariant, it is more than encouraging for our numerical scheme to
attain numerical results with good agreement to the reference
solutions. It is indicated that the method is applicable to problems
as granular avalanche flows. The underlying reason why the scheme
works smoothly is still under investigation.

%%% Local Variables:
%%% mode: latex
%%% TeX-master: "article"
%%% End:

\section*{Acknowledgement}
This work is financially supported by the National Natural Science
Foundation of China(No. 11826207 and No. 11826208).

\bibliographystyle{plain}
\bibliography{mybibfile}

\end{document}